\documentclass[12pt,a4paper,openright,reqno]{extarticle}
\usepackage[english]{babel}
\usepackage{newlfont}
\usepackage{amsbsy}
\usepackage[dvips]{graphicx}
\usepackage{color}
\usepackage{bbm}
\usepackage{graphicx, subfigure}
\usepackage{color}
\usepackage[center]{caption2}
\usepackage{amssymb}
\usepackage{amsmath}

\usepackage{latexsym}
\usepackage{amsthm}
\usepackage{amsthm}

\usepackage[dvips]{geometry}
\geometry{vmargin={3cm , 3cm}, hmargin={3cm , 3cm}}

\setlength{\parindent}{0cm}

\hyphenation{}
\theoremstyle{plain}
\newtheorem{thm}{Theorem}
\newtheorem{cor}[thm]{Corollary}

\newtheorem{lem}[thm]{Lemma}

\newtheorem{nota}[thm]{Notation}

\theoremstyle{remark}
\newtheorem{rem}[thm]{Remark}

\parindent 0cm

\newcommand{\Z}{\mathbb{Z}}
\newcommand{\R}{\mathbb{R}}\newcommand{\C}{\mathbb{C}}

\newcommand{\bp}{\begin{pmatrix}}
\newcommand{\ep}{\end{pmatrix}}

\usepackage{mathtools}
\def\alphaltiset#1#2{\ensuremath{\left(\kern-.2em\left(\genfrac{}{}{0pt}{}{#1}{#2}\right)\kern-.2em\right)}}
\usepackage[center]{caption2}

\begin{document}

\title{Maximal dimension of affine subspaces of specific matrices }
\author{Elena Rubei}
\date{}
\maketitle

{\footnotesize\em Dipartimento di Matematica e Informatica ``U. Dini'', 
viale Morgagni 67/A,
50134  Firenze, Italia }

{\footnotesize\em
E-mail address: elena.rubei@unifi.it}

\def\thefootnote{}
\footnotetext{ \hspace*{-0.36cm}
{\bf 2010 Mathematical Subject Classification: } 15A30

{\bf Key words:} affine subspaces, nilpotent matrices, normal matrices}

\begin{abstract} 
For every $n \in \mathbb{N}$ and every field $K$,  let $N(n,K)$ be the set of the nilpotent $n \times n$ matrices over  $K$ and let $D(n,K) $ be the set of the  $n \times n$ matrices over  $K$ which are diagonalizable over $K$.
Moreover, let $R(n) $ be the set of the normal $n \times n$ matrices.

In this short note we prove that the maximal dimension of an affine subspace in $N(n,K)$
is $ \frac{n(n-1)}{2}$ and, if the characteristic of the field is zero, 
 an affine not linear  subspace in $N(n,K)$ has dimension  less than or equal to $ \frac{n(n-1)}{2}-1$. 
Moreover we prove that
 the maximal dimension of an affine subspace in $R(n)$ is $n$,
the maximal dimension of a linear subspace in $D(n, \R)$ is  $ \frac{n(n+1)}{2}$, while the 
maximal dimension of an affine not linear subspace in $D(n, \R)$ is  $ \frac{n(n+1)}{2} -1$.
\end{abstract}

\section{Introduction}

There is a wide literature on the maximal dimension of linear or affine subspaces
of matrices with specific characteristics.
In particular we quote the following results.
For every $m,n \in \mathbb{N}$ and every field $K$, let $M(m \times n, K)$ be the vector space of the $m \times n$ matrices over $K$.
 Let $N(n,K)$ be the set of the nilpotent $n \times n $ matrices over  $K$ and 
 let $D(n,K) $ be the set of the  $n \times n$ matrices over  $K$ which are diagonalizable over $K$.
Moreover, let $R(n) $ be the set of the normal $n \times n$ matrices.

\begin{thm} \label{Gerst} [Gerstenhaber] Let $K$ be  a field. The maximal dimension of a linear subspace in  $N(n,K)$ 
 is $\frac{n(n-1)}{2}$. 
\end{thm}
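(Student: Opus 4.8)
The plan is to prove the two inequalities separately. The lower bound is immediate: the space of all strictly upper triangular $n\times n$ matrices over $K$ has dimension $\frac{n(n-1)}{2}$ and each of its elements has vanishing $n$-th power, hence is nilpotent, so the maximal dimension of a linear subspace of $N(n,K)$ is at least $\frac{n(n-1)}{2}$. The whole content of the theorem is the reverse inequality, which is the classical theorem of Gerstenhaber.

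For the upper bound I would argue by induction on $n$ (the case $n=1$ being vacuous), splitting according to whether a linear space $V\subseteq M(n \times n,K)$ of nilpotent matrices admits a proper nonzero common invariant subspace $W$. In the reducible case ($AW\subseteq W$ for every $A\in V$, with $0\subsetneq W\subsetneq K^{n}$) I would choose a basis of $K^{n}$ adapted to $W$ and write each $A\in V$ in block upper triangular form, with diagonal blocks $A_{1}$ (the restriction of $A$ to $W$) and $A_{2}$ (the endomorphism of $K^{n}/W$ induced by $A$); both blocks are nilpotent, the maps $A\mapsto A_{1}$ and $A\mapsto A_{2}$ send $V$ into linear spaces $V_{1},V_{2}$ of nilpotent matrices of sizes $k:=\dim W$ and $n-k$, and the kernel of $A\mapsto(A_{1},A_{2})$ consists of the matrices of $V$ whose only nonzero block is the $k\times(n-k)$ upper right one, so it has dimension at most $k(n-k)$. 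Applying the inductive hypothesis to $V_{1}$ and $V_{2}$ would then give
\[
\dim V\ \le\ k(n-k)+\tfrac{k(k-1)}{2}+\tfrac{(n-k)(n-k-1)}{2}\ =\ \tfrac{n(n-1)}{2},
\]
the displayed equality being the elementary identity $k(n-k)+\binom{k}{2}+\binom{n-k}{2}=\binom{n}{2}$. In particular this already disposes of the subcases in which $V$ has a nonzero common kernel (a nonzero vector of it spans an invariant line) and, dually, in which $\sum_{A\in V}\operatorname{im}A\subsetneq K^{n}$ (this sum is always $V$-invariant).

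The main obstacle is the remaining case, in which $V$ acts irreducibly on $K^{n}$: no invariant subspace is available, and --- as the example $V=\langle A,B\rangle\subseteq M(3 \times 3,K)$ with $A=E_{12}+E_{23}$, $B=E_{21}-E_{32}$ shows (here $E_{ij}$ are the matrix units; one checks $(\alpha A+\beta B)^{3}=0$ for all $\alpha,\beta\in K$, while $\ker A\cap\ker B=0$ and $\operatorname{im}A+\operatorname{im}B=K^{3}$, so $V$ has no common invariant subspace at all) --- such a $V$ need not even be triangularizable, and the inductive mechanism above breaks down. My plan here would be to pass to the algebraic closure $\overline K$, which leaves $\dim V$ unchanged and, when $K$ is infinite, preserves the nilpotency of all elements of $V$ (the coefficients of the characteristic polynomial of the generic element $\sum_{i}t_{i}A_{i}$ of $V$, for $(A_{i})$ a basis of $V$, are polynomials in the $t_{i}$ vanishing identically on $K^{\dim V}$, hence on $\overline K^{\dim V}$); iterating the reduction step if necessary, one may then assume by Burnside's theorem that $V$ generates $M(n \times n,\overline K)$ as an associative algebra, and deduce the bound $\dim V\le\frac{n(n-1)}{2}$ from Gerstenhaber's finer analysis of the variety $N(n,\overline K)$ of nilpotent matrices. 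I would cite Gerstenhaber's original paper for this last step rather than reproduce it; the extension to an arbitrary base field (finite fields in particular) is due to Serežkin. Combined with the first paragraph, this yields that the maximal dimension is exactly $\frac{n(n-1)}{2}$.
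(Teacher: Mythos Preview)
In the paper Gerstenhaber's theorem is merely \emph{cited}, not proved; the only argument the paper actually supplies is the proof of Theorem~\ref{nilp}, which specialises (take $P=0$) to a complete, field-independent proof of the linear case. That argument, following \cite{M-O-R}, is entirely different from yours: one fixes the nondegenerate trace form $(A,B)\mapsto\operatorname{tr}(AB)$, observes that $T^{\perp}$ is the space of upper triangular matrices for $T$ the strictly upper triangular ones, and uses the key lemma $\operatorname{tr}(AB)=0$ for all $A,B\in Z$ (a consequence of the vanishing of $S_2$ on nilpotents) to force $Z_2\oplus T^{\perp}\subset Z_1^{\perp}$, where $Z=Z_1\oplus Z_2$ with $Z_1=Z\cap T$. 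No induction, no reducibility analysis, no base change, and no case distinction between infinite and finite fields are needed.

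Your inductive reduction through a common invariant subspace is correct and pleasant, and your $3\times 3$ example showing that irreducible nilpotent spaces exist is to the point. The genuine gap is the irreducible case: you do not prove it, you \emph{invoke} Gerstenhaber's own ``finer analysis'' for it, and for finite fields you further invoke Sere\v{z}kin. As a proof \emph{of} Gerstenhaber's theorem this is circular --- the whole difficulty of the theorem lies precisely in that step. (Burnside's theorem alone does not give the bound: knowing that the associative algebra generated by $V$ is all of $M(n\times n,\overline K)$ says nothing directly about $\dim V$.) By contrast, the trace-form approach in the paper's proof of Theorem~\ref{nilp} handles every linear subspace uniformly, with no irreducible residue left over, and works verbatim over any field; this is exactly what the induction-plus-Burnside strategy fails to buy you.
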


We mention that in \cite{M-O-R}, the authors 
gave a new simple proof of Gerstenhaber's result.
In \cite{Q} and \cite{DP2} the authors generalized  Gerstenhaber's result as follows:

\begin{thm} [Quinlan, De Seguins Pazzis] 
Let $K$ be a field. The maximal dimension of a linear subspace of $n \times n$ matrices  over $K$ with no nonzero eigenvalue  is $\frac{n(n-1)}{2}$. 
\end{thm}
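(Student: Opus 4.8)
The plan is the following. The lower bound is immediate: the strictly upper triangular $n\times n$ matrices form a linear subspace of dimension $\frac{n(n-1)}{2}$ consisting of nilpotent matrices, none of which has a nonzero eigenvalue. For the upper bound one must show that a linear subspace $V\subseteq M(n\times n,K)$ all of whose elements have no nonzero eigenvalue in $K$ satisfies $\dim V\le\frac{n(n-1)}{2}$. A useful first observation exploits that $V$ is \emph{linear}: for $C\in V$ and $\lambda\in K\setminus\{0\}$ one has $\lambda I-C=\lambda(I-\lambda^{-1}C)$ with $\lambda^{-1}C\in V$, so the hypothesis is equivalent to $I-C$ being invertible for every $C\in V$, i.e. to the \emph{affine} subspace $I+V$ being contained in $GL_n(K)$; this already explains why affine (rather than merely linear) subspaces are the right objects here. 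When $K$ is algebraically closed the hypothesis just says that every $C\in V$ has all its eigenvalues equal to $0$, hence is nilpotent, and Theorem~\ref{Gerst} gives the bound at once.

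The real content is the case of a field that is not algebraically closed, where the hypothesis is \emph{not} preserved under extension of scalars (for instance $\begin{pmatrix}0&-1\\1&0\end{pmatrix}$ over $\R$), so one cannot reduce to $\bar K$. I would argue by induction on $n$, the case $n=1$ being trivial. For the step, choose $v\in K^n\setminus\{0\}$ and a linear form $\varphi$ on $K^n$ with $\varphi(v)\ne0$, write $K^n=Kv\oplus\ker\varphi$, and set $W=\{C\in V:\ Cv=0\ \text{and}\ \varphi\circ C=0\}$. Each $C\in W$ then kills $v$ and maps $\ker\varphi$ into itself, hence is block diagonal for $Kv\oplus\ker\varphi$, and its block $C|_{\ker\varphi}\in M((n-1)\times(n-1),K)$ again has no nonzero eigenvalue in $K$ (the spectrum of $C$ is that of $C|_{\ker\varphi}$ together with $0$); the assignment $C\mapsto C|_{\ker\varphi}$ identifies $W$ with a subspace of $M((n-1)\times(n-1),K)$ with the same property, so $\dim W\le\frac{(n-1)(n-2)}{2}$ by induction. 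Since $\frac{(n-1)(n-2)}{2}+(n-1)=\frac{n(n-1)}{2}$, it now suffices to choose $(v,\varphi)$ so that the linear map $C\mapsto(Cv,\ \varphi\circ C)$ on $V$ has image of dimension at most $n-1$.

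The hypothesis already yields $v\notin Vv$ (a relation $Cv=v$ would make $1$ a nonzero eigenvalue of $C$) and, dually, $\varphi\notin\{\varphi\circ C:C\in V\}$, so $Vv$ and $\{\varphi\circ C:C\in V\}$ each have dimension at most $n-1$ on their own. The main obstacle is to upgrade these two separate bounds into the single joint bound $n-1$ for the image of $C\mapsto(Cv,\varphi\circ C)$: this forces a precise alignment between the ``column behaviour'' $Vv$ and the ``row behaviour'' $\{\varphi\circ C\}$ of $V$, it is exactly the combinatorial core of Gerstenhaber's theorem, and it is the place where the non-closed field must be handled intrinsically rather than by base change (indeed a naive choice of $(v,\varphi)$ only gives a bound of size $\sim2n-3$ for that image). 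For this I would follow the short argument of \cite{M-O-R} in the nilpotent case, adapted as in \cite{Q} and \cite{DP2}: pass to the generic element $A=\sum_i t_iC_i$ of $V$ (with $C_i$ a basis of $V$) over the rational function field $K(t_1,\dots,t_{\dim V})$ — which still has no nonzero eigenvalue in that field, by a specialization argument — and exploit that, in its Fitting decomposition, the nilpotent block is governed by Theorem~\ref{Gerst} while the complementary block, on which $A$ is invertible with no eigenvalue even in the function field, adds nothing to the count. Finite fields require the usual mild care with genericity but present no essential new difficulty.
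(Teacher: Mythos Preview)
The paper does not prove this theorem: it is quoted as a result of Quinlan \cite{Q} and De Seguins Pazzis \cite{DP2}, with no argument supplied, so there is no proof in the paper to compare your proposal against.

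Regarding your proposal on its own merits: the lower bound, the reformulation as $I+V\subset GL_n(K)$, and the algebraically closed case are all fine. But the inductive step is not completed. You correctly isolate the crux---finding $(v,\varphi)$ so that $C\mapsto(Cv,\varphi\circ C)$ has image of dimension at most $n-1$---and then, instead of proving it, you defer to the very papers \cite{Q}, \cite{DP2} the theorem is attributed to. The closing paragraph also blurs two separate strategies: the induction you set up, and a ``generic element plus Fitting decomposition'' idea over $K(t_1,\dots,t_{\dim V})$, without explaining how the second feeds back into the first. In particular, on the invertible Fitting block the generic element has no eigenvalue in the function field, but that block need not be zero, and you give no mechanism by which it ``adds nothing to the count'' of $\dim_K V$ (the Fitting decomposition is a splitting of $L^n$, not of $V$, and there is no reason the other elements of $V$ should respect it). As written this is a plan whose decisive step is a black-box citation, not a proof.
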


As observed in \cite{DP} the above statement is equivalent to the statement that the maximal dimension of an affine subspace  of invertible  $n \times n$ matrices over $K$ is $\frac{n(n-1)}{2}$. 

We say that an affine subspace $S$ of $M(m \times n, K)$ has constant rank $r$ if every matrix of $S$ has rank $r$ and we say that a linear subspace $S$ 
 of $M(m \times n, K)$ has constant rank $r$ if every nonzero matrix of $S$ has rank $r$.

There are many results on linear subspaces of constant rank. We quote some of them.

\begin{thm}  {\bf (Westwick, \cite{W1})} For $2 \leq r \leq m \leq n$, we have the maximal dimension of a linear subspace in
$M(m \times n, \C)$ with constant rank $r$ is less than or equal to $ m+ n -2 r+1$ and greater than or equal to 
$ n-r+1 $.
\end{thm}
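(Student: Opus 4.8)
The statement combines two essentially independent assertions: a construction realizing dimension $n-r+1$ (the lower bound) and the inequality $\dim\le m+n-2r+1$ (the upper bound). I would treat them separately: the construction is elementary, while the upper bound is where the real work lies.

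\emph{The lower bound.} For $x=(x_0,\dots,x_{n-r})\in\C^{n-r+1}$ set $p_x(t)=\sum_{k=0}^{n-r}x_k t^k\in\C[t]$, and let $T(x)\in M(m\times n,\C)$ be the matrix whose rows beyond the $r$-th are zero and whose $i$-th row, for $1\le i\le r$, is the coordinate vector of $t^{i-1}p_x(t)$ in the basis $1,t,\dots,t^{n-1}$; explicitly $T(x)_{ij}=x_{j-i}$ when $1\le i\le r$ and $0\le j-i\le n-r$, and $T(x)_{ij}=0$ otherwise. The map $x\mapsto T(x)$ is linear and injective, so $S_0:=\{T(x):x\in\C^{n-r+1}\}$ is a linear subspace of dimension $n-r+1$; and if $x\neq 0$ then $p_x\neq 0$ in $\C[t]$, so the first $r$ rows of $T(x)$ are linearly independent, since a dependence $\sum_{i=1}^{r}c_i\,t^{i-1}p_x(t)=0$ forces $\bigl(\sum_i c_i t^{i-1}\bigr)p_x(t)=0$, hence $\sum_i c_i t^{i-1}=0$ as $\C[t]$ is a domain, hence $c_1=\dots=c_r=0$. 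Thus every nonzero element of $S_0$ has rank exactly $r$, and the lower bound follows (this part uses only $r\le m\le n$).

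\emph{The upper bound: set-up.} The plan is a Chern-class computation on the projectivization. Let $S\subseteq M(m\times n,\C)=\mathrm{Hom}(\C^n,\C^m)$ have constant rank $r$, put $d=\dim S$, and work on $\mathbb P:=\mathbb P(S)\cong\mathbb P^{\,d-1}$ with $h:=c_1(\mathcal O_{\mathbb P}(1))$, so $H^{*}(\mathbb P;\Z)=\Z[h]/(h^{d})$. The tautological inclusion $\mathcal O_{\mathbb P}(-1)\hookrightarrow S\otimes\mathcal O_{\mathbb P}$ yields a morphism of vector bundles $\varphi\colon\C^n\otimes\mathcal O_{\mathbb P}\to\C^m\otimes\mathcal O_{\mathbb P}(1)$ which at every point of $\mathbb P$ is, up to a nonzero scalar, a rank-$r$ matrix from $S\setminus\{0\}$; hence $\varphi$ has constant rank $r$, and $\mathcal K:=\ker\varphi$, $\mathcal I:=\mathrm{im}\,\varphi$, $\mathcal C:=\mathrm{coker}\,\varphi$ are vector bundles of ranks $n-r$, $r$, $m-r$ sitting in short exact sequences $0\to\mathcal K\to\C^n\otimes\mathcal O_{\mathbb P}\to\mathcal I\to 0$ and $0\to\mathcal I\to\C^m\otimes\mathcal O_{\mathbb P}(1)\to\mathcal C\to 0$. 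Taking total Chern classes, $c(\mathcal K)c(\mathcal I)=1$ and $c(\mathcal I)c(\mathcal C)=(1+h)^{m}$, so
\[
c(\mathcal K)\,(1+h)^{m}=c(\mathcal C)\qquad\text{in }\Z[h]/(h^{d}),
\]
while $c(\mathcal K)=1+\sum_{i=1}^{n-r}\kappa_i h^i$ and $c(\mathcal C)=1+\sum_{i=1}^{m-r}\gamma_i h^i$ with $\kappa_i,\gamma_i\in\Z$, because the ranks are $n-r$ and $m-r$.

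\emph{The upper bound: conclusion.} Now I argue by contradiction. If $d\ge m+n-2r+2$, then the degrees $m-r+1,\dots,m+n-2r+1$ all lie in $\{0,\dots,d-1\}$, and comparing coefficients of $h^i$ in the displayed identity for those $i$ gives (with the convention $\binom{m}{k}=0$ for $k<0$)
\[
\binom{m}{i}+\sum_{j=1}^{n-r}\kappa_j\binom{m}{i-j}=0,\qquad i=m-r+1,\dots,m+n-2r+1,
\]
which is a homogeneous linear system in $\kappa_0,\dots,\kappa_{n-r}$ (with $\kappa_0=c_0(\mathcal K)=1$) whose coefficient matrix is the binomial-coefficient matrix $\bigl(\binom{m}{\,m-r+1+p-q\,}\bigr)_{0\le p,q\le n-r}$. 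This matrix has nonzero determinant over $\C$ --- a product of binomial coefficients, by the classical lattice-path (Lindstr\"om--Gessel--Viennot) evaluation of such determinants, the characteristic-zero hypothesis being exactly what is needed --- so the system has only the zero solution, forcing $\kappa_0=0$, a contradiction. Hence $d\le m+n-2r+1$.

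\emph{The main obstacle.} Everything above is routine except the non-vanishing over $\C$ of that binomial determinant --- equivalently, the statement that no nonzero $R\in\C[z]$ with $\deg R\le n-r$ can make $(1+z)^{m}R(z)$ free of all monomials of degrees $m-r+1,\dots,m+n-2r+1$ --- which is the technical heart and where I expect the work to go. In the range $n\ge 2r-1$ one can sidestep it: if $d>m$ then $(1+h)^{m}=c(\mathcal I)c(\mathcal C)$ becomes an identity of honest polynomials, so unique factorization in $\C[h]$ gives $c(\mathcal I)=(1+h)^{r}$, whence $c(\mathcal K)=(1+h)^{-r}$, which cannot be a polynomial of degree $\le n-r$ since $\binom{n}{\,n-r+1\,}\neq 0$; this already yields $d\le\max(m,n-r+1)\le m+n-2r+1$. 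For the complementary range ($r$ close to $n$) a genuine determinant input seems unavoidable, and the purely elementary attempt --- normalizing a generic $A\in S$ to $\bp I_r&0\\ 0&0\ep$ and extracting $B_{22}=0$ together with the bilinear relations $B_{21}C_{12}+C_{21}B_{12}=0$ for $B,C\in S$ --- settles the case when $S$ equals the sum of $\{B\in S:B_{12}=0\}$ and $\{B\in S:B_{21}=0\}$, but in general runs into the same obstruction.
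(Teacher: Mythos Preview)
The paper does not prove this theorem. It is quoted in the introduction as a result of Westwick, alongside several other background results on constant-rank subspaces, and no argument for it appears anywhere in the text; the paper's own contributions are Theorems~\ref{nilp}, \ref{norm}, and \ref{diag}, on nilpotent, normal, and real-diagonalizable matrices. So there is no proof here to compare your proposal against.

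On the proposal itself: the lower-bound construction via the Toeplitz/polynomial space is correct and standard. Your upper-bound strategy---projectivize $S$, form the tautological rank-$r$ bundle morphism, and extract constraints from the Chern classes of kernel and cokernel---is essentially the approach of Westwick's original paper (and of Sylvester's earlier work on the same question). But by your own account the argument is incomplete: you isolate the non-vanishing of the binomial determinant $\det\bigl(\tbinom{m}{m-r+1+p-q}\bigr)_{0\le p,q\le n-r}$ as the crux and gesture at Lindstr\"om--Gessel--Viennot without carrying out the evaluation, and your alternative unique-factorization argument handles only the range $n\ge 2r-1$ (note also that it tacitly needs $d>n-r+1$ as well as $d>m$ before the coefficient of $h^{n-r+1}$ in $(1+h)^{-r}$ can be invoked, which is why the conclusion is $d\le\max(m,n-r+1)$ rather than $d\le m$). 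What you have written is a correct and well-organized plan rather than a finished proof; the determinant evaluation---equivalently, the claim that no nonzero $R\in\C[z]$ of degree at most $n-r$ makes $(1+z)^{m}R(z)$ vanish in every degree from $m-r+1$ to $m+n-2r+1$---is precisely the remaining work.
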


\begin{thm}  {\bf (Ilic-Landsberg, \cite{I-L})} If $r$ is even and greater than or equal to $2$, then
 the maximal dimension of a linear subspace in the space of the complex symmetric 
$n \times n$ matrices with constant rank $r$
is $n-r +1$.
\end{thm}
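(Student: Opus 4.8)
The plan is to prove the two halves of the statement separately: a construction for the lower bound and a vector-bundle argument for the upper bound. For the lower bound, the hypothesis $r=2s$ with $s\ge1$ is used as follows. I would fix a linear subspace $\mathcal{C}\subseteq M(s\times(n-s),\C)$ of constant rank $s$ and of the largest possible dimension $(n-s)-s+1=n-r+1$, for instance the ``banded Toeplitz'' matrices $C$ with $C_{i,j}=t_{j-i+1}$ for $1\le j-i+1\le n-r+1$ and $C_{i,j}=0$ otherwise, which have full row rank $s$ for every $(t_1,\dots,t_{n-r+1})\ne(0,\dots,0)$ (a suitable $s\times s$ minor is lower triangular with $t_k$ on the diagonal, $k$ the top nonzero index). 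Then
\[
\mathcal{L}_0=\left\{\,\begin{pmatrix}0&C\\ C^{T}&0\end{pmatrix}\ :\ C\in\mathcal{C}\,\right\}
\]
is a linear space of complex symmetric $n\times n$ matrices whose nonzero members all have rank $2\,\rk(C)=2s=r$, and $\dim\mathcal{L}_0=\dim\mathcal{C}=n-r+1$ (there is nothing to prove if $n<r$). Evenness of $r$ is essential here: a rank-$r$ symmetric matrix ``supported off the diagonal'' needs its two equal blocks to have rank $r/2$.

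For the upper bound, let $\mathcal{L}$ be a linear space of complex symmetric $n\times n$ matrices of constant rank $r$, put $d=\dim\mathcal{L}$, and assume $d\ge2$ (otherwise $d\le1\le n-r+1$). On $X=\mathbb{P}(\mathcal{L})\cong\mathbb{P}^{d-1}$ I would form the tautological morphism of bundles $\varphi\colon\mathcal{O}_X^{\,n}\to\mathcal{O}_X^{\,n}(1)$ whose entries are the linear forms on $\mathcal{L}$ recording the matrix entries; $\varphi$ is symmetric and of constant rank $r$, so $K:=\ker\varphi$ is a rank-$(n-r)$ subbundle of $\mathcal{O}_X^{\,n}$, $\mathcal{I}:=\operatorname{im}\varphi$ is a rank-$r$ subbundle of $\mathcal{O}_X^{\,n}(1)$, and $\mathcal{Q}:=\operatorname{coker}\varphi$ has rank $n-r$, fitting into
\[
0\to K\to\mathcal{O}_X^{\,n}\to\mathcal{I}\to0,\qquad 0\to\mathcal{I}\to\mathcal{O}_X^{\,n}(1)\to\mathcal{Q}\to0,
\]
and the symmetry of $\varphi$ giving a canonical isomorphism $\mathcal{Q}\cong K^{\vee}(1)$. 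Taking total Chern classes in $H^{*}(X;\Z)=\Z[h]/(h^{d})$, with $h=c_1(\mathcal{O}_X(1))$, the two sequences give $c(K)c(\mathcal{I})=1$ and $c(\mathcal{I})c(\mathcal{Q})=(1+h)^{n}$, hence
\[
c\big(K^{\vee}(1)\big)=c(\mathcal{Q})=(1+h)^{n}\,c(K);
\]
comparing degree-one terms yields $2c_1(K)=-r\,h$, which (as $h\ne0$) reproves that $r$ is even and pins down $c_1(K)=-\tfrac r2\,h$.

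The rest is positivity. The bundles $\mathcal{I}$ and $\mathcal{Q}$, being quotients of $\mathcal{O}_X^{\,n}$ and of $\mathcal{O}_X^{\,n}(1)$, are globally generated, and so is $\mathcal{I}^{\vee}(1)$ since $\mathcal{I}\hookrightarrow\mathcal{O}_X^{\,n}(1)$; by Kleiman transversality the Chern classes of all three are effective, hence nonnegative multiples of powers of $h$. Writing $c(K)=\sum_j(-1)^j\gamma_j h^{j}$ with $\gamma_j\ge0$ and $\gamma_j=0$ for $j>n-r$, the identity $c(\mathcal{Q})=(1+h)^{n}c(K)$ forces the coefficients of $h^{\,n-r+1},\dots,h^{\,d-1}$ on the right-hand side to vanish (because $\mathcal{Q}$ has rank $n-r$), and I expect that, played against the nonnegativity of the $\gamma_j$ and of the Chern classes of $\mathcal{I}$ and $\mathcal{I}^{\vee}(1)$, these vanishing conditions can hold only if $d-1\le n-r$, i.e. $\dim\mathcal{L}\le n-r+1$. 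This last implication is the crux of the whole proof: turning the Chern identity into the sharp bound by genuinely exploiting global generation is the technical heart of the argument, and it is exactly here that evenness of $r$ --- which makes $c_1(K)$ an honest integral class and the relevant bundles ``balanced'' --- is indispensable. The lower-bound construction, by contrast, is routine once a maximal constant-rank space of rectangular matrices is in hand.
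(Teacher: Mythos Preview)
The paper does not prove this theorem at all: it is quoted in the introduction as a known result of Ilic and Landsberg (reference \cite{I-L}), with no argument given. So there is no ``paper's own proof'' to compare against; the relevant comparison is with the original source.

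Your outline is indeed the Ilic--Landsberg strategy: the block construction $\begin{pmatrix}0&C\\ C^{T}&0\end{pmatrix}$ for the lower bound, and on $\mathbb{P}(\mathcal{L})$ the kernel/image/cokernel bundles of the tautological symmetric morphism, together with the duality $\mathcal{Q}\cong K^{\vee}(1)$ and Chern-class positivity, for the upper bound. The lower-bound part is complete and correct as you wrote it.

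The upper-bound part, however, is not a proof but a sketch with an explicit hole. You set up the exact sequences and correctly extract $c_1(K)=-\tfrac{r}{2}h$, and you identify the right positivity inputs (global generation of $\mathcal{I}$, $\mathcal{Q}$, and $\mathcal{I}^{\vee}(1)$). But the sentence ``I expect that, played against the nonnegativity \ldots\ these vanishing conditions can hold only if $d-1\le n-r$'' is precisely the content of the theorem, and you do not carry it out. Concretely, from $c(\mathcal{Q})=(1+h)^{n}c(K)$ and $\operatorname{rk}\mathcal{Q}=n-r$ you get relations among the $\gamma_j$, but deducing $d\le n-r+1$ from those relations plus nonnegativity requires a genuine combinatorial/positivity argument (in Ilic--Landsberg this is where the work is), and simply asserting that it ``should'' go through is not sufficient. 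As written, the crux of the upper bound is missing.
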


In case $r$ odd, the following result holds, see \cite{Ga}, \cite{H-P}, \cite{I-L}:

\begin{thm}  If $r$ is odd, then  the maximal dimension of a linear subspace in the space of the complex symmetric 
$n \times n$ matrices with constant rank $r$ is $1$.
\end{thm}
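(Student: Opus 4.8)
The lower bound is immediate: the line $\langle M\rangle$ spanned by any fixed complex symmetric matrix of rank $r$ has all its nonzero elements of rank $r$, so the maximal dimension is at least $1$. For the upper bound I would argue by contradiction. Assume $S$ is a linear subspace of complex symmetric $n\times n$ matrices all of whose nonzero elements have rank exactly the odd number $r$, and assume $\dim S\ge 2$. Replacing $S$ by a $2$-dimensional subspace, I may write $S=\langle A,B\rangle$ with $A,B$ linearly independent symmetric matrices such that $sA+tB$ has rank $r$ for every $(s,t)\in\C^{2}\setminus\{0\}$.

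The plan is then to read this pencil off as a morphism of vector bundles on $\mathbb{P}^{1}$. Put homogeneous coordinates $[s:t]$ on $\mathbb{P}^{1}$ and let $M$ be the morphism $\mathcal{O}^{\oplus n}\to\mathcal{O}^{\oplus n}\otimes\mathcal{O}(1)$ given by the matrix $sA+tB$ (whose entries are global sections of $\mathcal{O}(1)$). Since $M$ has the constant rank $r$ at every point of $\mathbb{P}^{1}$, its kernel $\mathcal{K}$, image $\mathcal{I}$ and cokernel $\mathcal{C}$ are locally free of ranks $n-r$, $r$, $n-r$, and we obtain the short exact sequences $0\to\mathcal{K}\to\mathcal{O}^{\oplus n}\to\mathcal{I}\to 0$ and $0\to\mathcal{I}\to\mathcal{O}^{\oplus n}\otimes\mathcal{O}(1)\to\mathcal{C}\to 0$. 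Dualizing both sequences, twisting by $\mathcal{O}(1)$, and using that $M$ is symmetric — so that under the canonical identification $(\mathcal{O}^{\oplus n})^{\vee}=\mathcal{O}^{\oplus n}$ one has $M^{\vee}\otimes\mathcal{O}(1)=M$ — one splices the dualized sequences and recovers $M$, which yields the key isomorphism $\mathcal{C}\cong\mathcal{K}^{\vee}\otimes\mathcal{O}(1)$.

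Now I compare degrees of line bundles on $\mathbb{P}^{1}$, which are integers. Write $\alpha=\deg\mathcal{K}\in\Z$. The first sequence gives $\deg\mathcal{I}=-\alpha$, and then the second gives $\deg\mathcal{C}=n+\alpha$; on the other hand the isomorphism $\mathcal{C}\cong\mathcal{K}^{\vee}\otimes\mathcal{O}(1)$ gives $\deg\mathcal{C}=-\alpha+(n-r)$. Equating the two expressions yields $2\alpha=-r$, so $\alpha=-r/2$, which is impossible for an integer $\alpha$ when $r$ is odd. Hence no such $S$ with $\dim S\ge 2$ exists, and the maximal dimension is exactly $1$.

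The step that needs care is the symmetry isomorphism $\mathcal{C}\cong\mathcal{K}^{\vee}\otimes\mathcal{O}(1)$: one has to keep track of the $\mathcal{O}(1)$-twists when dualizing the two exact sequences and splice them so that the reconstructed map $\mathcal{O}^{\oplus n}\to\mathcal{O}^{\oplus n}\otimes\mathcal{O}(1)$ is literally $M$ again, which is precisely where $M=M^{T}$ enters. The remaining ingredients — local freeness of the kernel, image and cokernel of a constant-rank bundle map, and additivity of degree in short exact sequences — are standard, and over $\mathbb{P}^{1}$ one could even replace the bundle language by orders of vanishing of determinants of minors of $sA+tB$ if one prefers a more elementary formulation.
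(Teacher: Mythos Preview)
The paper does not actually prove this theorem: it is quoted in the introductory survey and attributed to the references \cite{Ga}, \cite{H-P}, \cite{I-L}, with no argument supplied. So there is no ``paper's own proof'' to compare against; I can only assess your argument on its merits.

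Your proof is correct. The vector--bundle/pencil argument you give is precisely the mechanism behind the result and is in the spirit of \cite{I-L}. The computation checks: with $\alpha=\deg\mathcal{K}$, the two short exact sequences force $\deg\mathcal{I}=-\alpha$ and $\deg\mathcal{C}=n+\alpha$, while the symmetry identification $\mathcal{C}\cong\mathcal{K}^{\vee}\otimes\mathcal{O}(1)$ gives $\deg\mathcal{C}=-\alpha+(n-r)$ (the rank of $\mathcal{K}^{\vee}$ is $n-r$, so the twist contributes $n-r$), and equating yields $2\alpha=-r$. The only point where a referee would press is the one you already flag: the isomorphism $\mathcal{C}\cong\mathcal{K}^{\vee}\otimes\mathcal{O}(1)$. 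The cleanest way to nail it down is to dualize both exact sequences, twist by $\mathcal{O}(1)$, and observe that the resulting factorization $\mathcal{O}^{\oplus n}\twoheadrightarrow\mathcal{I}^{\vee}(1)\hookrightarrow\mathcal{O}^{\oplus n}(1)$ is literally $M^{\vee}(1)=M$ by symmetry; hence its kernel $\mathcal{C}^{\vee}(1)$ coincides with $\mathcal{K}$ and its cokernel $\mathcal{K}^{\vee}(1)$ coincides with $\mathcal{C}$. If you want a version free of sheaf language, the same parity obstruction can be read off from the Kronecker canonical form of the symmetric pencil $sA+tB$ (this is the route of \cite{Ga}): constant rank forces the absence of finite or infinite elementary divisors, and the remaining ``singular'' blocks in a symmetric pencil come in pairs, so the rank is even.
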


We mention also that,  in \cite{Fl}, Flanders
proved that, if $ r \leq m \leq n$, a linear subspace of $M(m \times n ,\mathbb{C})$ such that every of its elements has rank less than or equal to $r$ has dimension less than or equal to $r n$.

In \cite{Ru} we proved the following theorems:

\begin{thm}\label{mio}  Let $n,r \in \mathbb{N}$ with $r \leq n$.
Then the maximal dimension of an affine subspace in the space of the real symmetric 
$n \times n$ matrices with constant rank $r$ is less than or equal to 
$$
\left\lfloor \frac{r}{2} \right\rfloor
\left(n-  \left\lfloor \frac{r}{2} \right\rfloor\right)
.$$ 
\end{thm}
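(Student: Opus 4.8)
\emph{Proof strategy.} The plan is to bound $\dim V$ by studying how $S$ sits inside the symmetric determinantal variety of rank $\leq r$ matrices. Write $S = A + V$ with $A \in S$ and $V \subseteq \mathrm{Sym}(n,\R)$ the direction space, and set $k = \lfloor r/2 \rfloor$. The eigenvalues of the elements of $S$ vary continuously and none of them can vanish, so the inertia $(p,q,n-r)$ is constant on $S$; after replacing $S$ by $P^{T}SP$ for a suitable invertible $P$ — which alters neither symmetry, rank nor dimension — I may assume $p\ge q$ (hence $q\le k$) and $A = \mathrm{diag}(I_p,-I_q,0_{n-r})$. I first record the case $n=r$. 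Then $A=\mathrm{diag}(I_p,-I_q)$ and, writing $M\in S$ in $p\,|\,q$ blocks as $M=\begin{pmatrix}M_{11}&M_{12}\\ M_{12}^{T}&M_{22}\end{pmatrix}$, the map $M\mapsto M_{12}$ is injective on $S$: if $B=M-M'\in V$ had $B_{12}=0$, then the whole line $A+tB=\mathrm{diag}(I_p+tB_{11},-I_q+tB_{22})$, $t\in\R$, would lie in $S$ and hence be invertible for every $t$, which is impossible for nonzero real symmetric $B_{11},B_{22}$; so $B=0$. Thus $\dim S\le pq\le\lfloor r^{2}/4\rfloor=k(r-k)$, which is the claim for $n=r$.

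Assume now $r<n$, and let $\mathcal D_r$ be the variety of symmetric $n\times n$ matrices of rank $\le r$. Its smooth locus is the set of matrices of rank exactly $r$, where $T_{M}\mathcal D_r=\{N\in\mathrm{Sym}(n,\R): N(x,y)=0 \text{ for all } x,y\in\ker M\}$. Since $S$ is contained in this smooth locus, $V\subseteq T_{M}\mathcal D_r$ for every $M\in S$. Evaluating at $M=A$, whose kernel is spanned by the last $n-r$ coordinate vectors, shows that in $r\,|\,(n-r)$ blocks every $B\in V$ has zero $(2,2)$-block; hence every element of $S$ has the form $\begin{pmatrix}X&Y\\ Y^{T}&0\end{pmatrix}$, and for $M$ near $A$ the block $X$ is invertible, so the condition $\rk M=r$ is equivalent to $Y^{T}X^{-1}Y=0$, with $\ker M=\{(-X^{-1}Yv,v):v\in\R^{n-r}\}$.

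Now split $V$ via the projection $\rho\colon V\to M(r\times(n-r),\R)$ onto the $(1,2)$-block: put $V_0=\ker\rho$ and $\mathcal Y=\rho(V)$, so $\dim V=\dim V_0+\dim\mathcal Y$. For $B\in V_0$ we have $A+B=\mathrm{diag}(A'+B_{11},0)$ with $A'=\mathrm{diag}(I_p,-I_q)$, and $\rk(A+B)=r$ means precisely that $A'+B_{11}$ is invertible; so $A'+\{B_{11}:B\in V_0\}$ is an affine subspace of invertible symmetric $r\times r$ matrices, and the case $n=r$ applied in size $r$ gives $\dim V_0\le k(r-k)$. To bound $\mathcal Y$, apply $V\subseteq T_{A+B}\mathcal D_r$ for $B$ in a neighbourhood of $0$ in $V$ and an arbitrary $B'\in V$; substituting the formula for $\ker(A+B)$ and differentiating once at $B=0$ (replace $B$ by $sB$ and read off the coefficient of $s$), one gets, using $A'^{-1}=A'$ and bilinearity, that $Y^{T}A'Y'+(Y')^{T}A'Y=0$ for all $Y,Y'\in\mathcal Y$. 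Consequently, for each fixed $v\in\R^{n-r}$ and all $Y,Y'\in\mathcal Y$ we have $(Yv)^{T}A'(Y'v)=0$, i.e.\ the subspace $\mathcal Y(v):=\{Yv:Y\in\mathcal Y\}\subseteq\R^{r}$ is totally isotropic for $A'$, hence $\dim\mathcal Y(v)\le q$. Since $Y\mapsto(Ye_1,\dots,Ye_{n-r})$ embeds $\mathcal Y$ into $\mathcal Y(e_1)\times\cdots\times\mathcal Y(e_{n-r})$, we conclude $\dim\mathcal Y\le q(n-r)$, and therefore $\dim S=\dim V\le k(r-k)+q(n-r)\le k(r-k)+k(n-r)=k(n-k)$.

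I expect the bound $\dim\mathcal Y\le q(n-r)$ to be the main obstacle. Each element of $\mathcal Y$ has rank $\le q$ — its columns span an $A'$-isotropic subspace — but Flanders' theorem applied to this rank condition only yields $\dim\mathcal Y\le q\max(r,n-r)$, which is insufficient as soon as $n<2r$. The improvement comes from the geometry of $\mathcal D_r$ along $S$ together with a single differentiation, which forces each column evaluation $\mathcal Y(v)$ into one and the same $A'$-isotropic subspace; isolating and exploiting this is the real content, the congruence normalization, the $n=r$ base case and the bookkeeping being routine.
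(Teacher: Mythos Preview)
The theorem you are attempting to prove is not proved in the present paper at all: it is quoted from the author's earlier article \cite{Ru} (``Affine subspaces of matrices with constant rank'', Linear Algebra Appl.\ 644 (2022), 259--269) and appears here only as background in the introduction. Consequently there is no proof in this paper to compare your argument against.

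That said, your argument is essentially correct and self-contained. The reduction to a fixed inertia $(p,q,n-r)$ by connectedness of $S$, the congruence normalisation of $A$, and the base case $n=r$ via injectivity of the off-diagonal block are all sound; in particular the observation that a nonzero real symmetric $B_{11}$ (or $B_{22}$) has a real nonzero eigenvalue, forcing $I_p+tB_{11}$ to be singular for some $t$, is exactly what is needed. The key step for $r<n$ is the first-order computation along the affine space inside the smooth locus of the symmetric determinantal variety: from $V\subseteq T_{A+sB}\mathcal D_r$ and the explicit description of $\ker(A+sB)$ you correctly extract, at first order in $s$, the relation $Y^{T}A'Y'+(Y')^{T}A'Y=0$ for all $Y,Y'\in\mathcal Y$, and hence that each column space $\mathcal Y(v)$ is $A'$-isotropic of dimension at most $q$. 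The final count $\dim V\le k(r-k)+q(n-r)\le k(n-k)$ then follows, since a linear subspace of a product has dimension at most the sum of the dimensions of its coordinate projections.

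Two minor comments. First, the phrase ``into one and the same $A'$-isotropic subspace'' in your closing paragraph is slightly misleading: your proof does not show that the subspaces $\mathcal Y(v)$ coincide for different $v$, only that each is isotropic and hence of dimension $\le q$, and that is all you use. Second, to compare with the source you would have to consult \cite{Ru}; the present paper gives no indication of the method used there.
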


\begin{thm}\label{mio2}  Let $m,n,r \in \mathbb{N}$ with $r \leq m \leq n$.
Then the maximal dimension of an affine subspace in $M(m \times n, \R)$ with constant rank $r$ is 
$$ rn- \frac{r(r+1)}{2} .$$ 
\end{thm}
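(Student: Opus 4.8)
The plan is to prove the matching lower and upper bounds separately. For the lower bound, splitting $m=r+(m-r)$ and $n=r+(n-r)$, I would take the affine subspace
$$
S_0=\left\{\begin{pmatrix} I_r+A & B\\ 0 & 0\end{pmatrix}:\ A\in M(r\times r,\R)\ \text{strictly upper triangular},\ B\in M(r\times(n-r),\R)\right\}.
$$
Since $I_r+A$ is unipotent and hence invertible, every matrix in $S_0$ has rank exactly $r$; and $S_0$ is the translate by $\begin{pmatrix} I_r & 0\\ 0 & 0\end{pmatrix}$ of a linear space of dimension $\frac{r(r-1)}{2}+r(n-r)=rn-\frac{r(r+1)}{2}$, which is the asserted value.

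For the upper bound, let $S$ be an affine subspace of $M(m\times n,\R)$ of constant rank $r$, and write $S=M_0+V$ with $M_0\in S$ and $V$ a linear subspace, so $\dim S=\dim V$. Multiplying on the left and right by invertible matrices preserves ``affine subspace of constant rank $r$'' and the dimension, so I would first arrange $M_0=\begin{pmatrix} I_r & 0\\ 0 & 0\end{pmatrix}$. For $X\in V$ written in the corresponding block form $\begin{pmatrix} X_{11}&X_{12}\\ X_{21}&X_{22}\end{pmatrix}$, the key remark is that the whole line $M_0+tX$, $t\in\R$, lies in $S$ and hence has rank $r$ for every $t$; on the Zariski-dense set of $t$ with $\det(I_r+tX_{11})\neq 0$ this rank equals $r+\mathrm{rank}\bigl(tX_{22}-t^2X_{21}(I_r+tX_{11})^{-1}X_{12}\bigr)$, so the Schur complement must vanish identically in $t$. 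Reading this as an identity of rational functions, cancelling a factor $t$ and letting $t\to 0$ gives $X_{22}=0$; cancelling a further $t$ and letting $t\to 0$ gives $X_{21}X_{12}=0$. Applying the same to $X+Y$ then yields the bilinear identity $X_{21}Y_{12}+Y_{21}X_{12}=0$ for all $X,Y\in V$.

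Now I would split $V$. Let $V_0=\{X\in V: X_{12}=0,\ X_{21}=0\}$; by the block relations such an $X$ equals $\begin{pmatrix} X_{11}&0\\ 0&0\end{pmatrix}$, and $M_0+tX$ has rank $r$ for all $t$ precisely when $I_r+tX_{11}$ is invertible for all $t$, i.e.\ when $X_{11}$ has no nonzero eigenvalue. Hence $\{X_{11}:X\in V_0\}$ is a linear subspace of $M(r\times r,\R)$ with no nonzero eigenvalue, so by the Quinlan--De Seguins Pazzis theorem $\dim V_0\le\frac{r(r-1)}{2}$. Set $W=\{(X_{12},X_{21}):X\in V\}\subseteq M(r\times(n-r),\R)\oplus M((m-r)\times r,\R)$, so that $\dim V=\dim V_0+\dim W$ by rank--nullity. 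Each $(B,C)\in W$ satisfies $CB=0$; letting $U\subseteq\R^r$ be the span of all columns of all the $B$'s occurring in $W$ and $u=\dim U\le r$, the $B$-components of $W$ lie in $\mathrm{Hom}(\R^{n-r},U)$, while the bilinear identity forces $CB'=0$ whenever $(0,C)\in W$, so the $C$'s with $(0,C)\in W$ lie in $\mathrm{Hom}(\R^r/U,\R^{m-r})$; therefore
$$
\dim W\le u(n-r)+(m-r)(r-u)=(m-r)r+u(n-m)\le(m-r)r+r(n-m)=r(n-r),
$$
using $u\le r$ and $m\le n$. Adding up, $\dim S=\dim V\le\frac{r(r-1)}{2}+r(n-r)=rn-\frac{r(r+1)}{2}$, matching the lower bound.

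The two steps I expect to require real care are the Schur-complement argument --- one has to treat the blocks as rational functions of $t$, justify passing from equality on a dense set to an identity of rational functions, and cancel the powers of $t$ before specializing --- and the dimension count for $W$, where it is essential that $m\le n$ (so that $u(n-m)$ is largest when $u=r$) and that one uses $\dim V=\dim V_0+\dim W$ correctly. The normalization of $M_0$, the reduction of $V_0$ to the ``no nonzero eigenvalue'' situation, and the explicit lower-bound construction should be routine once the block identities $X_{22}=0$, $X_{21}X_{12}=0$ and $X_{21}Y_{12}+Y_{21}X_{12}=0$ are in hand.
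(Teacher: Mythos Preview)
This theorem is not proved in the present paper; it is quoted in the introduction as a result from the author's earlier article \cite{Ru}, so there is no proof here against which to compare your proposal.

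That said, your argument is correct and self-contained. The lower-bound example is valid. For the upper bound, the Schur-complement expansion of $M_0+tX$ indeed yields $X_{22}=0$, $X_{21}X_{12}=0$, and by polarization $X_{21}Y_{12}+Y_{21}X_{12}=0$. The map $X\mapsto X_{11}$ is injective on $V_0$ (since $X_{12}=X_{21}=X_{22}=0$ there), and $I_r+tX_{11}$ invertible for all real $t$ says exactly that $X_{11}$ has no nonzero \emph{real} eigenvalue; this is the hypothesis of the Quinlan--De~Seguins~Pazzis theorem over $K=\R$, giving $\dim V_0\le\frac{r(r-1)}{2}$. For $W$, projecting to the $B$-component has image contained in $\mathrm{Hom}(\R^{n-r},U)$ and kernel $\{(0,C)\in W\}$, which the bilinear identity forces into $\{C:C|_U=0\}$; hence $\dim W\le u(n-r)+(m-r)(r-u)=(m-r)r+u(n-m)\le r(n-r)$, the last inequality using $u\le r$ and $m\le n$. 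Adding the two bounds gives $\dim S\le rn-\frac{r(r+1)}{2}$, matching the construction. The only wording to tighten is that ``no nonzero eigenvalue'' should be read as ``no nonzero eigenvalue in $\R$''; over $K=\R$ that is precisely what is needed.
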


In \cite{Ru2} we investigated on the maximal dimension of affine subspaces in the space of the antisymmetric matrices of constant rank.  
Precisely, we proved the following theorem:

\begin{thm} \label{thmantisym}
The maximal dimension of an affine subspace in the space of the antisymmetric real 
$n \times n$ matrices with constant rank $2r$ is 
$$ \left\{ 
\begin{array}{lll} 
(n-r-1) r   &  if  & n \geq 2r+2   
\\
r(r-1)   & if & n=2r 
\\
 r(r+1)   & if  & n=2r+1
\end{array} \right.$$
\end{thm}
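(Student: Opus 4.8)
The plan is to establish the two matching bounds separately: a construction realizing the stated dimension in each of the three regimes, and a matching upper bound.

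\emph{The constructions.} In each regime use block matrices $A(X,B)=\bigl(\begin{smallmatrix} X & B \\ -B^{T} & 0\end{smallmatrix}\bigr)$ with $X$ square and antisymmetric, relying on the elementary fact that $\rk\bigl(\begin{smallmatrix} X & B \\ -B^{T} & 0\end{smallmatrix}\bigr)\ge 2\,\rk B$ for every $X$. If $n\ge 2r+2$, let $X$ be an arbitrary $r\times r$ antisymmetric matrix and let $B$ run over a maximal constant-rank-$r$ affine subspace of $M(r\times(n-r),\R)$, which by Theorem \ref{mio2} has dimension $r(n-r)-\binom{r+1}{2}$; here moreover $\rk A(X,B)\le r+\rk B\le 2r$, so the rank is exactly $2r$, and $\dim=\binom r2+r(n-r)-\binom{r+1}{2}=(n-r-1)r$. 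If $n=2r+1$, let $X$ be an arbitrary $(r+1)\times(r+1)$ antisymmetric matrix and let $B$ run over a maximal constant-rank-$r$ affine subspace of $M((r+1)\times r,\R)$, of dimension $\binom{r+1}{2}$ by Theorem \ref{mio2}; since an antisymmetric matrix of odd order has even rank $\le 2r$, the bound $\rk A(X,B)\ge 2r$ forces equality, and $\dim=2\binom{r+1}{2}=r(r+1)$. If $n=2r$, let $X$ be an arbitrary $r\times r$ antisymmetric matrix and let $B$ run over a maximal affine subspace of invertible $r\times r$ matrices, of dimension $\binom r2$ by \cite{DP}; then $\det A(X,B)=\pm(\det B)^{2}\ne0$, so $A(X,B)$ is invertible, and $\dim=2\binom r2=r(r-1)$.

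\emph{The upper bound.} Let $S=A_{0}+V$ be affine of constant rank $2r$. Congruence $A\mapsto P^{T}AP$ preserves antisymmetry and rank, so we may assume $A_{0}=\bigl(\begin{smallmatrix} J & 0 \\ 0 & 0\end{smallmatrix}\bigr)$ with $J$ the standard symplectic form of size $2r$. For $N=\bigl(\begin{smallmatrix} N_{1} & N_{2} \\ -N_{2}^{T} & N_{3}\end{smallmatrix}\bigr)\in V$, the requirement $\rk(A_{0}+tN)=2r$ for all $t\in\R$, read off from the Schur complement of $J+tN_{1}$ (invertible for small $t$) as $t\to0$, forces $N_{3}=0$ and $N_{2}^{T}J^{-1}N_{2}=0$, i.e.\ the columns of each $N_{2}$ span a $J$-isotropic subspace of $\R^{2r}$. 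Hence $N\mapsto N_{2}$ is a surjection $V\to W:=\{N_{2}:N\in V\}$ whose kernel is $\{\bigl(\begin{smallmatrix} N_{1}&0\\0&0\end{smallmatrix}\bigr):N_{1}\in V_{0}\}$, where $V_{0}$ is linear and $J+V_{0}$ consists of invertible antisymmetric matrices; thus $\dim S=\dim V_{0}+\dim W$. Two inputs then close the argument: (a) the case $n=2r$, namely that if $J+V_{0}$ consists of invertible antisymmetric matrices then $\dim V_{0}\le r(r-1)$; and (b) that a linear subspace $W\subseteq M(2r\times m,\R)$ all of whose members have $J$-isotropic column span satisfies $\dim W\le rm$ for $m\ge 2$, and trivially $\dim W\le 2r$ for $m=1$ — writing $J^{-1}=\bigl(\begin{smallmatrix}0&I\\-I&0\end{smallmatrix}\bigr)$ and $C=\bigl(\begin{smallmatrix}C_{+}\\C_{-}\end{smallmatrix}\bigr)$, the condition on $C$ reads ``$C_{+}^{T}C_{-}$ symmetric'', and (b) follows by a bounded-rank estimate of Flanders type (for instance via the projection modulo a generic Lagrangian). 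Granting (a) and (b): for $n\ge 2r+2$ one gets $\dim S\le r(r-1)+r(n-2r)=(n-r-1)r$; for $n=2r+1$, $\dim S\le r(r-1)+2r=r(r+1)$; and the case $n=2r$ is (a) itself.

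\emph{The main obstacle.} The crux is (a). First-order (tangent-space) information only gives $V_{0}\subseteq\{$antisymmetric matrices of rank $\le 2r-2\}$, which is far too weak, so the Pfaffian must be used globally. For $N\in V_{0}$ the polynomial $\mu\mapsto\mathrm{Pf}(\mu J+N)$ has degree $\le r$ and is nonzero for every $\mu\ne0$ (since $\mu J+N=\mu(J+N/\mu)$ and $J+V_{0}$ consists of invertibles); this forces $\mathrm{Pf}$ to vanish identically on $V_{0}$ and tightly restricts its lower-order coefficients. Since the directional derivative of $\mathrm{Pf}$ at $M$ in the direction $N$ is $\tfrac12\mathrm{Pf}(M)\operatorname{tr}(M^{-1}N)$, one obtains $\operatorname{tr}\bigl((J+N')^{-1}N\bigr)=0$ for all $N,N'\in V_{0}$, and expanding in powers of $N'$ yields in particular $\operatorname{tr}(J^{-1}N'J^{-1}N)=0$, i.e.\ $J^{-1}V_{0}$ is totally isotropic for the trace form on the space $\{T:JT=T^{T}J\}$ of $J$-self-adjoint operators. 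Combining this with the facts that every element of $J^{-1}V_{0}$ is singular and has no nonzero real eigenvalue should pin $\dim V_{0}$ down to exactly $r(r-1)$, most cleanly by induction on $r$ after splitting off a hyperbolic plane of the symplectic form. Converting these constraints into the sharp constant — rather than the weaker bound that the tangent-space data alone provides — is, I expect, the delicate technical heart of the proof.
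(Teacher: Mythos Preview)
The paper does not prove Theorem~\ref{thmantisym}. It is quoted in the introduction as a result established in \cite{Ru2}; no argument for it appears anywhere in the present paper. So there is no ``paper's own proof'' to compare against.

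On the merits of your sketch: your constructions are correct in all three regimes (and the identity $\det A(X,B)=(\det B)^{2}$ for $n=2r$ is right, though the rank inequality $\rk A(X,B)\ge 2\rk B$ already suffices). Your upper-bound framework---normalising $A_{0}$, reading off $N_{3}=0$ and $N_{2}^{T}J^{-1}N_{2}=0$ from the Schur complement, and splitting $\dim V=\dim V_{0}+\dim W$---is sound.

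The genuine gaps are exactly where you flag them, and they are not minor. For (b), the isotropy condition gives only $\rk C\le r$ for each $C\in W\subseteq M(2r\times m,\R)$, and Flanders' bound in that generality yields $\dim W\le 2r^{2}$ when $m<2r$, not $rm$; your phrase ``projection modulo a generic Lagrangian'' is not an argument, and one must actually use the bilinear consequence of isotropy (that $c_{1}^{T}J^{-1}c_{2}'+(c_{1}')^{T}J^{-1}c_{2}=0$ for all pairs in $W$) rather than just the rank bound. For (a), you correctly identify that first-order data is far too weak and that the Pfaffian must be used globally, and you derive the trace-isotropy relation $\operatorname{tr}(J^{-1}N'J^{-1}N)=0$; but you stop short of turning this into the bound $r(r-1)$, and you yourself call this ``the delicate technical heart''. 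As written, then, the proposal is a plausible outline with the two hardest steps left open, not a proof.
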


In this short note we focus on the maximal dimension of affine subspaces in $N(n,K)$ for any field $K$ and on the maximal dimension of affine subspaces in $R(n)$ and in $D(n, \R)$; precisely we prove the following results.

\begin{thm} \label{nilp} Let $K$ be a field.
The maximal dimension of an affine subspace in $N(n,K)$
is $ \frac{n(n-1)}{2}$. 

Moreover, if the characteristic of the field is zero, 
 an affine not linear  subspace in $N(n,K)$ has dimension  less than or equal to $ \frac{n(n-1)}{2}-1$. 
\end{thm}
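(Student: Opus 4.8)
\emph{The value $\frac{n(n-1)}{2}$.} The lower bound is immediate: the space of strictly upper triangular $n\times n$ matrices is a linear, hence affine, subspace of $N(n,K)$ of dimension $\frac{n(n-1)}{2}$ (this is also one direction of Theorem~\ref{Gerst}). For the upper bound I would use a translation: if $S=A+V$ is an affine subspace contained in $N(n,K)$, with direction space $V$, then $I+S=(I+A)+V$ is an affine subspace of the same dimension, and each of its elements $I+(A+v)$ is invertible, since $I+M$ has inverse $\sum_{j=0}^{n-1}(-1)^jM^j$ whenever $M^n=0$. Thus $I+S$ is an affine subspace of the invertible matrices, and the result of Quinlan and De Seguins Pazzis, in the equivalent form recalled after their theorem (an affine subspace of invertible $n\times n$ matrices over $K$ has dimension at most $\frac{n(n-1)}{2}$), yields $\dim S=\dim(I+S)\le\frac{n(n-1)}{2}$.

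\emph{Reduction for the second statement.} By the first part, and since a linear subspace is in particular affine, it suffices to show: if $\mathrm{char}\,K=0$ and $S\subseteq N(n,K)$ is an affine subspace with $\dim S=\frac{n(n-1)}{2}$, then $S$ is linear, i.e.\ $0\in S$. Write $S=A+V$ with $A\in S$ and $V$ the direction space, so $\dim V=\frac{n(n-1)}{2}$, and suppose for contradiction that $0\notin S$; equivalently $A\notin V$, since $A\in V$ would force $0=A+(-A)\in S$.

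\emph{The key step.} I claim the linear subspace $W:=V+KA$ is contained in $N(n,K)$. Granting this, $\dim W=\frac{n(n-1)}{2}+1$ because $A\notin V$, contradicting Theorem~\ref{Gerst}, and we are done. To prove $W\subseteq N(n,K)$, take $w=tA+v$ with $t\in K$, $v\in V$. If $t\neq 0$, then $w=t\,(A+t^{-1}v)$ with $t^{-1}v\in V$, so $A+t^{-1}v\in S\subseteq N(n,K)$ and hence $w$ is nilpotent; this already handles everything except the hyperplane $t=0$, i.e.\ it remains to show $V\subseteq N(n,K)$, and here the characteristic-zero hypothesis is used. Fixing a basis of $V$, the entries of $(tA+v)^n$ are polynomials in $t$ and in the coordinates of $v$, and by the previous line they vanish for every $v\in V$ whenever $t\neq 0$; since a field of characteristic zero is infinite, a one-variable polynomial over $K$ vanishing at all nonzero points is the zero polynomial, so these entries vanish identically, giving $(tA+v)^n=0$ for all $t\in K$ and $v\in V$. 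In particular $v^n=0$ for all $v\in V$, so $W=V+KA\subseteq N(n,K)$, as claimed.

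\emph{Main obstacle.} The only genuinely nontrivial point is the inclusion $V\subseteq N(n,K)$: over an arbitrary field the direction space of an affine subspace of nilpotent matrices need not be nilpotent, so the characteristic-zero assumption is essential at exactly this step. (Alternatively one could deduce $V\subseteq N(n,K)$ from $\operatorname{tr}\!\big((A+\lambda v)^k\big)=0$ for all $\lambda\in K$, reading off the leading coefficient $\operatorname{tr}(v^k)=0$ for $k=1,\dots,n$ and invoking Newton's identities, which again requires $\mathrm{char}\,K=0$.) Everything else in the argument is formal.
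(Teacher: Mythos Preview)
Your proof is correct, but both halves follow a genuinely different route from the paper.

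For the upper bound in the first statement, the paper adapts the Mathes--Omladi\u{c}--Radjavi proof of Gerstenhaber's theorem: after conjugating so that the translation vector $P$ is in Jordan form, it decomposes the direction space as $Z=(Z\cap T)\oplus Z_2$ (with $T$ the strictly upper triangular matrices), proves a new lemma that $\mathrm{tr}(AB)=0$ whenever $P,\,P+A,\,P+B,\,P+A+B$ are all nilpotent, and combines this with $T^{\perp}=\{\text{upper triangular}\}$ to squeeze out the dimension bound. Your argument bypasses all of this machinery by translating $S$ to $I+S\subseteq GL_n(K)$ and invoking the Quinlan--De~Seguins~Pazzis bound on affine spaces of invertible matrices; this is much shorter, at the cost of importing a result that is strictly stronger than Gerstenhaber's.

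For the second statement, both arguments ultimately show that the linear span $W=KA+V$ still lies in $N(n,K)$ and then invoke Gerstenhaber. The paper deduces this from Theorem~3 of Mathes--Omladi\u{c}--Radjavi (the additive semigroup generated by $P,\,P+z_1,\dots,P+z_h$ consists of nilpotents, hence so does its linear span in characteristic~$0$). Your Zariski-density argument --- $(tA+v)^n=0$ for all $t\neq 0$, hence for $t=0$ since $K$ is infinite --- proves the same inclusion directly and is more self-contained; it also makes transparent exactly where the characteristic hypothesis enters (infinitely many $t$), whereas in the paper this is hidden inside the cited theorem.
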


\begin{thm} \label{norm} 
The maximal dimension of an affine subspace in $R(n)$
is $ n$. 
\end{thm}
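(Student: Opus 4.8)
The plan is to prove the two matching bounds; throughout, $A^*$ denotes the conjugate transpose, so that $A$ is normal iff $AA^*=A^*A$. For the lower bound it is enough to observe that the space of all diagonal $n\times n$ matrices is a linear (hence affine) subspace of $R(n)$ of dimension $n$. For the upper bound I would fix an arbitrary affine subspace $S\subseteq R(n)$, write it as $S=A_0+V$ with $A_0\in S$ and $V$ its direction subspace (so $\dim S=\dim V$), and prove in turn that $V$ is itself a linear subspace contained in $R(n)$, that any such $V$ is a commuting family of normal matrices, and that this forces $\dim V\le n$.

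For the first point, note that $A_0+zX\in S$ for every $X\in V$ and $z\in\C$, since $zX\in V$. Expanding the normality identity $(A_0+zX)(A_0+zX)^*=(A_0+zX)^*(A_0+zX)$ and using that $A_0$ is normal gives
\[
\bar z\,(A_0X^*-X^*A_0)+z\,(XA_0^*-A_0^*X)+|z|^2\,(XX^*-X^*X)=0\qquad\text{for all }z\in\C .
\]
Separating here the homogeneous parts of degree $1$ and $2$ in $z$ (e.g.\ by evaluating at $z=1$ and $z=2$) forces $XX^*=X^*X$, so every $X\in V$ is normal and $V\subseteq R(n)$.

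For the second point, let $X,Y\in V$. Since $X+zY\in V\subseteq R(n)$ for all $z\in\C$ and $X,Y$ are both normal, the same expansion now yields $\bar z\,(XY^*-Y^*X)+z\,(YX^*-X^*Y)=0$ for all $z$, hence (evaluating at $z=1$ and $z=i$) $XY^*=Y^*X$. Thus $Y^*$ commutes with every element of $V$. Because $Y$ is normal it is unitarily diagonalizable, so $Y^*$ is a polynomial in $Y$ and $Y$ is a polynomial in $Y^*$ (interpolate $\lambda\mapsto\bar\lambda$ on the spectrum of $Y$); therefore $Y$ and $Y^*$ have the same commutant, and so $Y$ commutes with every element of $V$. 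As $Y\in V$ was arbitrary, $V$ is a commuting family of normal matrices, and moreover $X$ commutes with $Y^*$ for all $X,Y\in V$.

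Finally, a commuting family of normal matrices is simultaneously unitarily diagonalizable: choosing a basis $X_1,\dots,X_k$ of $V$ and using that the $X_i$ and the $X_i^*$ pairwise commute, there is a unitary $U$ with $U^*X_iU$ diagonal for every $i$, hence $U^*XU$ diagonal for every $X\in V$. Then $X\mapsto U^*XU$ embeds $V$ injectively into the $n$-dimensional space of diagonal matrices, so $\dim S=\dim V\le n$; together with the diagonal example this gives the value $n$. The real content — and the step I expect to be the main obstacle — is the passage from $XY^*=Y^*X$ to $XY=YX$, a Fuglede--Putnam-type phenomenon; once that is in hand, the remaining ingredients are a routine expansion and the spectral theorem.
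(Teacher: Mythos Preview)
Your proof is correct and follows essentially the same route as the paper: reduce to showing the direction space $V$ consists of normal matrices, expand the normality of $X+zY$ to obtain $XY^{*}=Y^{*}X$, upgrade this to simultaneous diagonalizability of $V$, and conclude $\dim V\le n$. The only minor difference is in the bridge step: you pass from $XY^{*}=Y^{*}X$ to $XY=YX$ via the observation that $Y$ is a polynomial in $Y^{*}$, whereas the paper argues that a unitary diagonalizing $B^{*}$ also diagonalizes $B$ and then invokes a hand-built lemma to pass from pairwise to global simultaneous diagonalizability; both are standard and yield the same conclusion.
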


\begin{thm} \label{diag} 
The maximal dimension of a linear subspace in $D(n, \R)$ is  $ \frac{n(n+1)}{2}$, while the 
maximal dimension of an affine not linear subspace in $D(n, \R)$ is  $ \frac{n(n+1)}{2} -1$.
\end{thm}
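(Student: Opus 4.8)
\emph{Proof plan.} The two lower bounds are immediate and I record them first. The space of real symmetric $n\times n$ matrices is a linear subspace of $D(n,\R)$ of dimension $\tfrac{n(n+1)}{2}$ by the spectral theorem, and the set of real symmetric matrices of trace $1$ is an affine subspace of $D(n,\R)$ of dimension $\tfrac{n(n+1)}{2}-1$ which is not linear since it does not contain $0$. So the content of the theorem is the two upper bounds, and both will be deduced from the following lemma: \emph{every linear subspace of $M(n\times n,\R)$ consisting of matrices whose eigenvalues are all real has dimension at most $\tfrac{n(n+1)}{2}$.}

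To prove the lemma I would use the quadratic form $q(M)=n\operatorname{tr}(M^{2})-(\operatorname{tr}M)^{2}$ on $M(n\times n,\R)$. If $M$ has real eigenvalues $\lambda_{1},\dots,\lambda_{n}$ (with multiplicity), then $\operatorname{tr}(M^{2})=\sum_{i}\lambda_{i}^{2}\ge\tfrac1n\bigl(\sum_{i}\lambda_{i}\bigr)^{2}$ by the Cauchy--Schwarz inequality, so $q(M)\ge0$; hence $q$ is nonnegative on the set described in the lemma. On the other hand, writing $M=S_{0}+\tfrac{\operatorname{tr}M}{n}I+C$ with $S_{0}$ traceless symmetric and $C$ antisymmetric, and using $\operatorname{tr}(S_{0}C)=0=\operatorname{tr}(S_{0}I)$, a short computation gives $q(M)=n\bigl(\|S_{0}\|^{2}-\|C\|^{2}\bigr)$ for the Frobenius norm. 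Thus $q$ is positive definite on the $\bigl(\tfrac{n(n+1)}{2}-1\bigr)$-dimensional space of traceless symmetric matrices, vanishes on $\R I$, and is negative definite on the $\tfrac{n(n-1)}{2}$-dimensional space of antisymmetric matrices, and these three subspaces are mutually $q$-orthogonal; so $q$ has exactly $\tfrac{n(n-1)}{2}$ negative squares, and by Sylvester's law of inertia a linear subspace of $M(n\times n,\R)$ on which $q\ge0$ has dimension at most $n^{2}-\tfrac{n(n-1)}{2}=\tfrac{n(n+1)}{2}$. This proves the lemma, hence the linear statement.

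For the affine not linear statement, let $S=A+W$ be an affine subspace of $D(n,\R)$ that is not linear; then $0\notin S$, i.e. $A\notin W$, so $V:=W+\R A$ is a linear subspace of dimension $\dim W+1$. I claim $V$ consists of matrices with only real eigenvalues: for $w\in W$ and $s\ne0$ one has $sA+w=s\bigl(A+\tfrac1s w\bigr)\in D(n,\R)$ because $A+\tfrac1s w\in S$ and $D(n,\R)$ is stable under nonzero scaling; letting $s\to0$ and using that the set of matrices with only real eigenvalues is closed (the characteristic polynomial depends continuously on the matrix, and the set of monic real polynomials of degree $n$ with all roots real is closed) gives that $w$ also has only real eigenvalues. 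Applying the lemma to $V$ yields $\dim W+1\le\tfrac{n(n+1)}{2}$, that is $\dim S\le\tfrac{n(n+1)}{2}-1$, which together with the example above finishes the proof.

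The main obstacle is the lemma — concretely, recognising that $q(M)=n\operatorname{tr}(M^{2})-(\operatorname{tr}M)^{2}$ is the invariant to use and that its number of negative squares is exactly $\tfrac{n(n-1)}{2}$. Once the lemma is in place the passage from the affine to the linear case via the span $W+\R A$ is routine; the only point requiring a little care is that the limiting argument produces membership in the \emph{closed} set of matrices with real eigenvalues rather than in $D(n,\R)$ itself, which is not closed.
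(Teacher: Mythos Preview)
Your proof is correct, and the overall skeleton---exhibit symmetric matrices for the lower bounds, pass from the affine space $S=A+W$ to the linear span $V=W+\R A$, and use a continuity argument for eigenvalues---matches the paper. The difference lies in the key lemma and how you prove it.

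The paper never states your lemma about subspaces of matrices with only real eigenvalues. For the linear bound it simply observes that a linear subspace of $D(n,\R)$ meets $A(n,\R)$ only in $0$ (a nonzero real antisymmetric matrix has a nonreal eigenvalue), so by a raw dimension count its dimension is at most $n^{2}-\dim A(n,\R)=\tfrac{n(n+1)}{2}$. For the affine bound the paper argues by contradiction: if $\dim W\ge\tfrac{n(n+1)}{2}$ then the span $V$ meets $A(n,\R)$ in some nonzero $Y$; since $V\setminus W\subset D(n,\R)$ one has $Y\in W$, and then $A+tY\in S$ is diagonalizable for every $t$, which fails for large $t$ by continuity of roots. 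So the paper uses the antisymmetric intersection twice, once for each statement, and a perturbation argument at the end.

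Your route is more uniform: you prove once and for all that any linear subspace of matrices with only real eigenvalues has dimension at most $\tfrac{n(n+1)}{2}$, via the signature of $q(M)=n\operatorname{tr}(M^{2})-(\operatorname{tr}M)^{2}$ and Sylvester's inertia law, and then reduce both statements to this. This is a genuinely different argument for the lemma---the paper's version would just say ``such a subspace meets $A(n,\R)$ only in $0$''---and your formulation (real eigenvalues rather than diagonalizable) is exactly what is needed to absorb the closure step cleanly. The trade-off is that the quadratic-form computation is heavier than the one-line intersection argument, but it yields a sharper standalone lemma and makes the affine case a direct corollary rather than a separate contradiction argument.
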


\section{Proof of the theorems}

\begin{nota} Let $n \in \mathbb{N} -\{0\} $. 
For any field $K$, 
we denote the $n \times n$ identity matrix over $K$ by $I^K_n$.
We omit the superscript and the subscript when it is clear from the context.
 
 We denote by $E^K_{i,j}$ the $n \times n$  matrix, whose $(i,j)$-entry is $1$ and all the other entries are zero.
 We omit the superscript when it is clear from the context.
 
 We denote by $A(n,K)$ the subspace of the antisymmetric matrices of $M(n \times n, K)$. 
 
 

For any square matrix $A$, let $S_2(A)$ be the sum of the $2 \times 2 $ principal minors of $A$.
 
For any complex matrix $A$, let $A^{\ast}$ be the transpose of the conjugate matrix of $A$.

\end{nota}

To prove Theorem \ref{nilp} we follow the guidelines of the proof of Gerstenhaber's result in \cite{M-O-R} and firstly 
we need to generalize some lemmas in  \cite{M-O-R}.

\begin{lem} Let $n \in \mathbb{N} -\{0\} $ and $K$ be a field.
Let $R  \in M(n \times n, K)$ and $U \in N(n,K)$. Then
$$S_2(R)- S_2(R+U)= tr(RU).$$
\end{lem}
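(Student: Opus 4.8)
The plan is to regard $S_2$ as a quadratic form in the matrix entries and to use that a nilpotent matrix over an arbitrary field $K$ has characteristic polynomial $\lambda^n$. Since the $2\times 2$ principal minor of a matrix $A\in M(n\times n,K)$ on rows and columns $\{i,j\}$ equals $A_{ii}A_{jj}-A_{ij}A_{ji}$, we have $S_2(A)=\sum_{i<j}(A_{ii}A_{jj}-A_{ij}A_{ji})$, and this is also the coefficient of $\lambda^{n-2}$ in $\det(\lambda I-A)$, i.e. the second elementary symmetric function of the eigenvalues of $A$ over the algebraic closure of $K$. First I would substitute $A=R+U$ and expand:
\[
S_2(R+U)=S_2(R)+S_2(U)+\sum_{i<j}(R_{ii}U_{jj}+U_{ii}R_{jj}-R_{ij}U_{ji}-U_{ij}R_{ji}).
\]
Because $U\in N(n,K)$, its characteristic polynomial is $\lambda^n$, so all its coefficients other than the leading one vanish; in particular $tr(U)=0$ and $S_2(U)=0$.

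It then remains to identify the bilinear cross term with $-tr(RU)$. Reindexing by ordered pairs, $\sum_{i<j}(R_{ii}U_{jj}+U_{ii}R_{jj})=\sum_{i\neq j}R_{ii}U_{jj}=tr(R)\,tr(U)-\sum_i R_{ii}U_{ii}$, which equals $-\sum_i R_{ii}U_{ii}$ since $tr(U)=0$; likewise $\sum_{i<j}(R_{ij}U_{ji}+U_{ij}R_{ji})=\sum_{i\neq j}R_{ij}U_{ji}$. Hence the cross term equals
\[
-\sum_i R_{ii}U_{ii}-\sum_{i\neq j}R_{ij}U_{ji}=-\sum_{i,j}R_{ij}U_{ji}=-tr(RU),
\]
and, together with $S_2(U)=0$, this yields $S_2(R)-S_2(R+U)=tr(RU)$.

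I do not expect a real obstacle: the whole argument is a direct bilinear computation. The one point that deserves care is to keep it characteristic-free — in particular, to obtain $S_2(U)=0$ and $tr(U)=0$ from the shape of the characteristic polynomial of a nilpotent matrix (valid over any field, e.g. by passing to the algebraic closure and triangularizing) rather than from Newton's identities, which would require dividing by $2$. This lemma is the analogue over a general field of the corresponding step in the proof of Gerstenhaber's theorem in \cite{M-O-R}, and it is presumably meant to be combined with further lemmas on $S_2$ in order to control the coefficients of the characteristic polynomial along an affine subspace contained in $N(n,K)$.
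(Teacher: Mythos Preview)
Your proof is correct and takes a genuinely different route from the paper's. The paper first treats the case where $U$ is in Jordan (hence strictly upper triangular) form by a direct entrywise calculation of $S_2(R)-S_2(R+U)$, and then reduces the general case to this one via conjugation, using that both $S_2$ and $tr$ are similarity invariants. You instead expand $S_2$ as a quadratic form in the entries, obtaining in effect the general polarization identity $S_2(R+U)=S_2(R)+S_2(U)+tr(R)\,tr(U)-tr(RU)$, and then invoke only the facts $tr(U)=0$ and $S_2(U)=0$, which you correctly deduce from the characteristic polynomial of a nilpotent matrix being $\lambda^n$ (valid over any field). Your argument is arguably cleaner: it avoids Jordan form and conjugation altogether and isolates exactly what is needed from nilpotency. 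The paper's approach, on the other hand, is closer in spirit to the original argument in \cite{M-O-R} and makes the computation very concrete. Both are characteristic-free, and your caution about not using Newton's identities (which would require division by $2$) is well placed.
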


\begin{proof}
$\bullet$ First let us suppose that $U$ is in  Jordan form. Then 
$$S_2(R)- S_2(R+U)=$$ $$= \sum_{i,j \in 
\{1,\ldots , n\}, \; i<j} \left(R_{i,i} R_{j,j} - R_{i,j} R_{j,i}\right) - \sum_{i,j \in 
\{1,\ldots , n\}, \; i<j} \left((R+U)_{i,i} (R+U)_{j,j} - (R+U)_{i,j} (R+U)_{j,i}\right) =$$
$$= \sum_{i,j \in 
\{1,\ldots , n\}, \; i<j} \left(R_{i,i} R_{j,j} - R_{i,j} R_{j,i}\right) - \sum_{i,j \in 
\{1,\ldots , n\}, \; i<j} \left(R_{i,i} R_{j,j} - (R+U)_{i,j} R_{j,i}\right) =$$
$$= \sum_{i \in 
\{1,\ldots , n-1\}}  U_{i,i+1} R_{i+1,i}
= \sum_{j\in \{1,\ldots , n\}}  \sum_{i \in \{1, \ldots ,n\}}  R_{j,i} U_{i,j} 
= \sum_{j \in 
\{1,\ldots , n\}} (RU)_{j,j} =  tr(RU).$$

$\bullet$ Now let $U$ be generic. Let $C \in GL(n,K)$ be such that $C^{-1}UC$ is in Jordan form. Then 
$$S_2(R)- S_2(R+U)= S_2(C^{-1} R C)- S_2(C^{-1}(R+U)C)=$$
$$  = S_2(C^{-1} R C)- S_2(C^{-1}RC +C^{-1}UC)= $$
$$= tr (C^{-1}RCC^{-1}UC)  =tr(RU),$$
where in the last but one equality we have used the previous item.
\end{proof}

\begin{lem} \label{generaliz}
Let $n \in \mathbb{N} -\{0\} $ and $K$ be a field.
Let $P,A,B \in M(n \times n, K)$ such that $P, P+A, P+B, P+A+B \in N(n,K)$. 
Then $tr(AB)=0$.
\end{lem}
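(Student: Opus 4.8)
The plan is to extract the relation $tr(AB)=0$ from the ``mixed second difference'' of the quadratic map $X\mapsto S_2(X)$ at $P$ in the directions $A$ and $B$. Concretely, I would consider
$$\Delta := S_2(P) - S_2(P+A) - S_2(P+B) + S_2(P+A+B),$$
prove that $\Delta = tr(A)\,tr(B) - tr(AB)$ with no nilpotency hypothesis whatsoever, and then observe that the hypotheses force both $\Delta = 0$ and $tr(A)=tr(B)=0$, which together give $tr(AB)=0$.

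The computation of $\Delta$ rests on the polarization identity
$$S_2(X+Y) = S_2(X) + S_2(Y) + tr(X)\,tr(Y) - tr(XY) \qquad \text{for all } X,Y\in M(n\times n,K),$$
which I would prove by the same entrywise bookkeeping used in the previous lemma: expanding $S_2(X+Y)=\sum_{i<j}\big((X+Y)_{ii}(X+Y)_{jj}-(X+Y)_{ij}(X+Y)_{ji}\big)$ and regrouping the cross terms via $\sum_{i\ne j}X_{ii}Y_{jj}=tr(X)tr(Y)-\sum_i X_{ii}Y_{ii}$ and $\sum_{i\ne j}X_{ij}Y_{ji}=tr(XY)-\sum_i X_{ii}Y_{ii}$. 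Since no division is involved, this holds over every field $K$. Applying it to expand $S_2(P+A)$, $S_2(P+B)$ and $S_2(P+A+B)$ (the latter by first taking $X=P$, $Y=A+B$, then polarizing $S_2(A+B)$ again) and substituting into $\Delta$, all the terms $S_2(P)$, $S_2(A)$, $S_2(B)$, $tr(P)tr(A)$, $tr(P)tr(B)$, $tr(PA)$, $tr(PB)$ cancel in pairs, leaving precisely $\Delta = tr(A)tr(B) - tr(AB)$.

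To finish, I would use two elementary facts about a nilpotent matrix $M$, valid over any field: $tr(M)=0$, and $S_2(M)=0$, the latter because $\det(\lambda I - M)=\lambda^n$ while $S_2(M)$ is the coefficient of $\lambda^{n-2}$ in this characteristic polynomial (equivalently, the second elementary symmetric function of the eigenvalues of $M$, which all vanish). By hypothesis $P$, $P+A$, $P+B$, $P+A+B$ are nilpotent, so each of the four summands of $\Delta$ is zero and hence $\Delta=0$; moreover $tr(A)=tr(P+A)-tr(P)=0$ and $tr(B)=tr(P+B)-tr(P)=0$. Therefore $0=\Delta = tr(A)tr(B)-tr(AB) = -tr(AB)$, as wanted.

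The only point needing care — and the reason this does not reduce to a single use of the previous lemma, as it does in the linear case $P=0$, where one simply applies that lemma with $R=A$, $U=B$ to get $0-0=tr(AB)$ — is that here $A$ and $B$ are \emph{not} assumed nilpotent, so none of the ``edge differences'' $A$, $B$, $A+B$ is available as the nilpotent argument of that lemma; this is exactly why the full polarization identity, valid for arbitrary $X$ and $Y$, is the right tool. I do not anticipate any real obstacle, and in particular no restriction on $\mathrm{char}\,K$ enters.
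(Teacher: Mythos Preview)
Your argument is correct. Both your proof and the paper's use the vanishing of $S_2$ on nilpotent matrices to extract $tr(AB)$ from a second-order difference of $S_2$-values, but they are organized differently. The paper does not prove your general polarization identity; instead it applies the previous lemma (which is exactly your identity specialized to the case where one summand is nilpotent) twice, with $R=A$, $U=P$ and then $R=A$, $U=P+B$, obtaining
\[
tr(AB)=tr(A(B+P))-tr(AP)=\bigl(S_2(A)-S_2(A+B+P)\bigr)-\bigl(S_2(A)-S_2(A+P)\bigr)=S_2(A+P)-S_2(A+B+P)=0.
\]
So the paper never needs the cross term $tr(A)\,tr(B)$ or the separate observation that $tr(A)=tr(B)=0$; by choosing the nilpotent directions $P$ and $P+B$, that term is suppressed from the start. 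Your route, by contrast, proves the full identity $S_2(X+Y)=S_2(X)+S_2(Y)+tr(X)\,tr(Y)-tr(XY)$ once and for all, which is slightly more work up front but is cleaner, more symmetric in $A$ and $B$, and in fact subsumes the previous lemma as the special case where $Y$ is nilpotent.
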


\begin{proof}
By the previous lemma, we have:
$$ S_2(A)- S_2(A+P)= tr(AP)$$ 
(since $P$ is nilpotent)
and
$$ S_2(A)- S_2(A+B+P)= tr(A(B+P))$$ 
(since $B+P$ is nilpotent). Hence we get:
$$ tr(AB)= tr(A(B+P))  -  tr(AP)=
S_2(A)- S_2(A+B+P) -  S_2(A)+ S_2(A+P)= $$ 
$$=- S_2(A+B+P) + S_2(A+P)= -0+0=0,$$
where the last but one equality holds because $A+P$ and $A+B+P$ are nilpotent.
\end{proof}

The following lemma and corollaries we be useful to prove Theorem \ref{norm} and   probably they are well-known; for the convenience of the reader we include them here.

\begin{lem}  \label{simdiagpre} Let
$\{A_1, \ldots, A_r\}$ be a  subset of $M(n \times n, \C) $ such that $A_i$ and $A_j$ are 
simultaneously diagonalizable for every $i,j \in \{1, \ldots,r\}$; let $E_l^i $ for $l=1, \ldots, t_i$ be the eigenspaces of $A_i$ for any $i$. 
Then  
 $$\C^n= V_1 \oplus \ldots \oplus V_k$$
 for some  linear subspaces $V_j$ such that:
 
 1) for any $i=1, \ldots, r$ and $l=1 , \ldots, t_i$ the subspace $E_l^i$ is the direct sum of some of the $V_j$,
 
 2) for any $j=1, \ldots, k$, we have that 
 $$V_j = E_{l_1}^{i_1} \cap \ldots \cap E_{l_u}^{i_u}$$ 
 for some $u, i_1, \ldots , i_u, l_1, \ldots, l_u$.
\end{lem}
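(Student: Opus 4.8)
The plan is to build the decomposition by simultaneously diagonalizing the whole commuting family. First I recall the standard fact: a finite set $\{A_1,\dots,A_r\}$ of diagonalizable matrices is simultaneously diagonalizable if and only if they pairwise commute; the hypothesis that each pair is simultaneously diagonalizable gives exactly pairwise commutativity, so the whole family is simultaneously diagonalizable. Hence there is a basis of $\C^n$ consisting of common eigenvectors, i.e. a direct sum decomposition $\C^n = W_1 \oplus \cdots \oplus W_n$ into common eigenlines. For each $j$ and each $i$, the line $W_j$ lies inside exactly one eigenspace of $A_i$, say $E^i_{\ell(i,j)}$.

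Next I would pass to the coarsest decomposition compatible with all the $A_i$ at once. Define an equivalence relation on $\{1,\dots,n\}$ by declaring $j \sim j'$ when $W_j$ and $W_{j'}$ lie in the same eigenspace of $A_i$ for every $i=1,\dots,r$ (equivalently $\ell(i,j)=\ell(i,j')$ for all $i$). Let the equivalence classes be $C_1,\dots,C_k$ and set $V_j := \bigoplus_{s \in C_j} W_s$. Then $\C^n = V_1 \oplus \cdots \oplus V_k$. To get property 2), note that for a fixed class $C_j$ every $W_s$ with $s\in C_j$ sits in the same eigenspace $E^i_{\ell_i}$ of $A_i$, so $V_j \subseteq E^i_{\ell_i}$ for each $i$; intersecting over $i$ gives $V_j \subseteq \bigcap_{i=1}^r E^i_{\ell_i}$. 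Conversely any common eigenvector in that intersection is a combination of the $W_s$ lying in all these eigenspaces, and those $W_s$ are precisely the ones in the class $C_j$ (by definition of the equivalence relation), so the reverse inclusion holds and $V_j = \bigcap_{i=1}^r E^i_{\ell_i}$; discarding the indices $i$ for which $E^i_{\ell_i}=\C^n$ leaves an expression of the form in 2).

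Finally, property 1): fix $i$ and an eigenspace $E^i_\ell$ of $A_i$. Since the $W_s$ form a basis adapted to $A_i$, we have $E^i_\ell = \bigoplus_{s:\ \ell(i,s)=\ell} W_s$. The index set $\{s:\ \ell(i,s)=\ell\}$ is a union of equivalence classes, because membership in it depends only on the value $\ell(i,\cdot)$, which is constant on each class. Hence $E^i_\ell$ is the direct sum of the corresponding $V_j$'s. The one point that needs a word of care — and the only mild obstacle — is the standard simultaneous-diagonalization fact in the first paragraph: one should either cite it or sketch the usual induction (diagonalize $A_1$, then observe each other $A_i$ preserves each eigenspace of $A_1$ because they commute, restrict, and induct), but this is routine and I would state it as known.
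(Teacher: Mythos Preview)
Your argument is correct, but it follows a different route from the paper's. The paper argues by induction on $r$: assuming a decomposition $\C^n = W_1 \oplus \cdots \oplus W_h$ satisfying 1) and 2) for $A_1,\ldots,A_{r-1}$, it notes that each $W_j$, being an intersection of eigenspaces of the earlier $A_i$, is $A_r$-invariant (since $A_r$ commutes with each $A_i$), and then refines the decomposition by taking the nonzero intersections $W_j \cap E^r_l$. Your approach goes the other way: you first invoke full simultaneous diagonalization to get a decomposition into common eigenlines, and then \emph{coarsen} by grouping lines with the same joint eigenvalue profile.

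Both work. The trade-off is one of logical order. In the paper, Lemma~\ref{simdiagpre} is proved from scratch precisely so that simultaneous diagonalization (Corollary~\ref{simdiag}) can be read off as a consequence; your proof reverses this dependence, assuming the content of Corollary~\ref{simdiag} as the ``standard fact'' in your first paragraph and deducing the lemma from it. That is perfectly legitimate if you cite or independently sketch the usual induction for simultaneous diagonalization (as you note), but within the paper's own development it would be circular. Your route is a bit slicker once the standard fact is in hand; the paper's route is self-contained and yields the common-eigenbasis corollary for free.
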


\begin{proof} We prove the statement by induction on $r$.

If $r=1$ we can take $V_l= E_l^1$ for $l=1, \ldots, t_1$ and $k=t_1$.

Let us prove that the statement in the case $r-1$ implies the statement in the case $r$. 
By induction assumption, 
 $$\C^n= W_1 \oplus \ldots \oplus W_h$$
 for some  linear subspace $W_j$ such that:
 
 1) for any $i=1, \ldots, r-1$ and for any $l=1, \ldots, t_i$ the subspace $E_l^i$ is the direct sum of some of the $W_j$
 
 2) for any $j=1, \ldots, h$, we have that 
 $$W_j = E_{l_1}^{i_1} \cap \ldots \cap E_{l_u}^{i_u}$$ 
 for some $u, i_1, \ldots , i_u, l_1, \ldots, l_u$ with $i_1, \ldots, i_u \in \{1, \ldots, r-1\}$.
 
 By 2)  and by the fact that $A_r(E_l^i) \subset E_l^i$ for any $i=1, \ldots, r-1$ and any $l=1, \ldots, t_i$, we have that $A_r (W_j) \subset W_j$ for any $j=1, \ldots,h$. Hence the eigenspaces of $A_r$ are the direct sum of the eigenspaces of $A_r|_{W_j}$ for $j=1, \ldots, h$.
Consider the subspaces $$W_j \cap E_l^r,$$ 
for  $j=1, \ldots, h$ and $ l=1, \ldots, t_r$, that are nonzero; order them in some way and call them $V_1, \ldots, V_k$. 
Obviously every $W_j$ is the direct sum of some of the $V_m$, so 
 $$\C^n= V_1 \oplus \ldots \oplus V_k$$
and condition 1) is clearly satisfied.  Also condition 2) is satsified, in fact: let $m \in \{1, \ldots, k\}$; let $j$ and $l$ be such that 
$ V_m = W_j \cap E_l^r$;
 since $$W_j = E_{l_1}^{i_1} \cap \ldots \cap E_{l_u}^{i_u}$$
 for some $u, i_1, \ldots , i_u, l_1, \ldots, l_u$ with $i_1, \ldots, i_u \in \{1, \ldots, r-1\}$, we get $$ V_m = E_{l_1}^{i_1} \cap \ldots \cap E_{l_u}^{i_u} \cap E_l^r,$$
 hence condition 2) is satisfied. 
\end{proof}

\begin{cor}  \label{simdiag}
If $\{A_1, \ldots, A_r\}$ is a  subset of $M(n \times n, \C) $ such that $A_i$ and $A_j$ are 
simultaneously diagonalizable for every $i,j \in \{1, \ldots,r\}$, then  there  exists a basis ${\cal B}$ of  $\C^n$ 
such that every element of ${\cal B}$ is an  eigenvector of $A_i$  for every $i \in \{1, \ldots,r\}$.
\end{cor}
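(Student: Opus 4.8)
The plan is to deduce this directly from Lemma \ref{simdiagpre}. First I would apply that lemma to the family $\{A_1,\ldots,A_r\}$ to obtain a decomposition $\mathbb{C}^n = V_1 \oplus \cdots \oplus V_k$ satisfying conditions 1) and 2). The key observation is that each $V_j$ is contained in a \emph{single} eigenspace of $A_i$ for every $i$: fixing $i$, the eigenspaces $E_1^i,\ldots,E_{t_i}^i$ form a direct sum decomposition of $\mathbb{C}^n$, and by condition 1) each of them is the direct sum of some of the $V_m$; since the $V_m$ themselves decompose $\mathbb{C}^n$ as a direct sum, each $V_j$ must lie inside exactly one $E_l^i$. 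Consequently every nonzero vector of $V_j$ is simultaneously an eigenvector of $A_1,\ldots,A_r$.

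Then I would choose an arbitrary basis $\mathcal{B}_j$ of each $V_j$ and set $\mathcal{B} = \mathcal{B}_1 \cup \cdots \cup \mathcal{B}_k$. Since $\mathbb{C}^n$ is the direct sum of the $V_j$, the set $\mathcal{B}$ is a basis of $\mathbb{C}^n$, and by the previous paragraph every element of $\mathcal{B}$ is a common eigenvector of all the $A_i$, which is exactly the assertion.

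I do not expect any genuine obstacle here; the only point that needs a moment of care is the claim that each $V_j$ sits inside a single eigenspace of each $A_i$ rather than merely inside a sum of several of them, and this follows from comparing the two direct sum decompositions of $\mathbb{C}^n$ as above. One could alternatively prove the corollary from scratch by the standard induction on $r$, refining at each step a common eigenbasis of $A_1,\ldots,A_{r-1}$ inside each eigenspace of $A_r$, but reusing Lemma \ref{simdiagpre} keeps the argument short.
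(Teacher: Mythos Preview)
Your proposal is correct and follows the same approach as the paper: apply Lemma~\ref{simdiagpre}, take a basis of each $V_j$, and let $\mathcal{B}$ be their union. The paper's proof is terser and leaves implicit the verification that each $V_j$ lies inside a single eigenspace of every $A_i$, which you spell out via condition~1); this extra justification is sound and arguably an improvement in clarity.
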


\begin{proof}
Write 
 $$\C^n= V_1 \oplus \ldots \oplus V_k$$
 as in Lemma \ref{simdiagpre}; for $j=1,\ldots, k$, take a basis of $V_j$ and let ${\cal B}$ be the union of such bases.
\end{proof}
\begin{cor} \label{simdiaginf}
If $Z$ is a  linear subspace of $M(n \times n, \C) $ such that $A,B$  are 
simultaneously diagonalizable for every $A,B \in Z$, then  there  exists a basis ${\cal B}$ of  $\C^n$ 
such that every element of ${\cal B}$ is an  eigenvector of every element of $Z$.
\end{cor}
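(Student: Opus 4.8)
\textbf{Proof proposal for Corollary \ref{simdiaginf}.}

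The plan is to reduce the statement to the already established finite case, Corollary \ref{simdiag}, by exploiting the fact that $Z$ lives inside the finite-dimensional vector space $M(n \times n, \C)$ and is therefore itself finite-dimensional. So the first step is to choose a (finite) basis $A_1, \ldots, A_r$ of $Z$.

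Next I would check that $\{A_1, \ldots, A_r\}$ satisfies the hypothesis of Corollary \ref{simdiag}: since each $A_i$ and each $A_j$ belong to $Z$, the assumption on $Z$ gives immediately that $A_i$ and $A_j$ are simultaneously diagonalizable for every $i, j \in \{1, \ldots, r\}$. Hence Corollary \ref{simdiag} applies and yields a basis $\mathcal{B}$ of $\C^n$ every element of which is an eigenvector of each $A_i$, $i = 1, \ldots, r$.

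Finally I would verify that this same $\mathcal{B}$ works for all of $Z$. Let $v \in \mathcal{B}$ and write $A_i v = \lambda_i v$ for suitable $\lambda_i \in \C$. An arbitrary element $M \in Z$ can be written $M = \sum_{i=1}^r c_i A_i$ with $c_i \in \C$, and then $Mv = \sum_{i=1}^r c_i \lambda_i v$, so $v$ is an eigenvector of $M$ (with eigenvalue $\sum_{i=1}^r c_i \lambda_i$). Since this holds for every $v \in \mathcal{B}$ and every $M \in Z$, the corollary follows.

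There is no real obstacle here: the only point that needs care is the observation that $Z$, being a linear subspace of a finite-dimensional space, admits a finite basis, after which everything is a one-line consequence of Corollary \ref{simdiag} together with the linearity of $M \mapsto Mv$. One could even state and prove this by induction directly mimicking Lemma \ref{simdiagpre}, but that would be unnecessarily heavy compared to the finite-basis reduction.
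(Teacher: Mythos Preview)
Your proposal is correct and follows essentially the same argument as the paper: choose a finite basis $\{A_1,\ldots,A_r\}$ of $Z$, apply Corollary~\ref{simdiag} to obtain $\mathcal{B}$, and then observe that each $v\in\mathcal{B}$ is an eigenvector of any linear combination of the $A_i$. The paper's version is terser (it does not spell out the eigenvalue computation), but the logical content is identical.
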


\begin{proof}
Let $\{A_1, \ldots, A_r\}$ be a basis of $Z$;
by Corollary \ref{simdiag} there  exists a basis ${\cal B}$ of  $\C^n$ 
such that every element of ${\cal B}$ is an  eigenvector of $A_i$  for every $i \in \{1, \ldots,r\}$. But then every element of ${\cal B}$ is an  eigenvector of any linear combination of the matrices  $A_i$, that is 
of every element of $Z$.
\end{proof}

As we have already said the proof of Theorem  \ref{nilp} is very similar to  the proof of Gerstenhaber's result in \cite{M-O-R} but we need to use Lemma \ref{generaliz}. 

\begin{proof}[Proof of Theorem \ref{nilp}]
Let $P \in M(n \times n,K)$ and $Z$ linear subspace of $M(n \times n,K)$.
Let $S=P+Z$ and suppose $S \subset N(n,K)$. We want to show that $\dim(S)$, that is $\dim(Z)$, is less than or equal to   $\frac{n(n-1)}{2}.$ We can suppose that $P$ is in Jordan form.

Let $T$ be the set of the strictly upper triangular matrices.

Consider the bilinear form on  $M(n \times n,K)$
$ (A,B) \mapsto tr(AB).$ 
It is nondegenerate and, with respect to this bilinear form, we have that 
$$ T^{\perp}=\{A \in M(n \times n, K) | \;A\; \mbox{\rm upper triangular}\}.$$ 
Let $Z_1= Z \cap T$
and let $Z_2$ be a subspace such that 
$$ Z= Z_1 \oplus Z_2.$$
Observe that   
\begin{equation} \label{form1}
Z_2 \cap T^{\perp} \subset T, 
\end{equation}
in fact if $A$ is an upper triangular matrix such that $P+A$ is nilpotent, then $A$ is strictly upper triangular (remember that $P$ is in Jordan form and nilpotent).

Moreover, for the definition of $Z_1$ and $Z_2$,  we have that
$Z_2 \cap T=\{0\}.$
From this formula and from formula  (\ref{form1}), we get:
\begin{equation} \label{form3}
Z_2 \cap T^{\perp}=\{0\}.
\end{equation}
Obviously, since $Z_1 \subset T$, we have that 
\begin{equation} \label{form4}
T^{\perp} \subset Z_1^{\perp}.
\end{equation}
Finally, from Lemma \ref{generaliz}, we have that 
\begin{equation} \label{form5}
Z_2 \subset Z_1^{\perp}
\end{equation}
From   (\ref{form3}), (\ref{form4}) and (\ref{form5}), we get:
$$ Z_2 \oplus T^{\perp} \subset Z_1^{\perp},$$ 
hence 
$$ \dim(Z_2) + \dim( T^{\perp} ) \leq\dim( Z_1^{\perp})= n^2 -\dim (Z_1),$$
thus $$\dim (Z)  \leq  n^2 - \dim( T^{\perp} )= 
\frac{n(n-1)}{2} .$$

Suppose now that  the characteristic of $K$ is $0$ and that $P \not \in Z$. Let $\{z_1, \ldots, z_h\}$ be a basis of $Z$. Then the span of $P$ and $Z$ is generated by $P, P+z_1, \ldots, P+z_h$. The additive semigroup generated by 
$P, P+z_1, \ldots, P+z_h$ consists only of nilpotents.   
 Theorem 3 in the paper \cite{M-O-R} states that  the linear space generated by a set ${\cal E}$ of $ n \times n$ matrices over a field of characteristic $0$  consists only of nilpotents if and only if  the additive semigroup generated  ${\cal E}$  consists only of nilpotents. Hence the linear space generated by 
$P, P+z_1, \ldots, P+z_h$, that is the span of $P$ and $Z$, consists only of nilpotents.   Hence, by Theorem \ref{Gerst} the dimension of the span of $P$ and $Z$ is less than or equal to $
\frac{n(n-1)}{2} $, thus the dimension of $Z$ is  less than or equal to $
\frac{n(n-1)}{2}-1 $.

\end{proof}

\begin{rem} The second statement of Theorem \ref{nilp} is not true if the characteristic of the field  $K$ is not zero: take $K= \Z/2$, $n=2$ and $S = E_{1,2}+ \langle E_{1,2}+ E_{2,1}\rangle$; it consists only of $E_{1,2}$ and of $E_{2,1}$, which are both nilpotent, and its dimension is $1$, that is $
\frac{n(n-1)}{2} $.

\end{rem}

\begin{proof}[Proof of Theorem \ref{norm}]
Let $P \in M(n \times n,\C)$ and $Z$ linear subspace of $M(n \times n,\C)$.
Let $S=P+Z$ and suppose $S \subset R(n)$. We want to show that $\dim(S)$, that is $\dim(Z)$, is  less than or equal to  $n$. 

Let $A \in Z$; then 
$$( P+s A)(P^{\ast}+s A^{\ast})= (P^{\ast}+s A^{\ast})( P+s A) \;\;\;\; \; \forall s \in \R ,$$
and this implies 
$$P P^{\ast}+ s( AP^{\ast}+ PA^{\ast}) + s^{2} AA^{\ast} =  P^{\ast} P+ s( P^{\ast}A+ A^{\ast}P) + s^{2} A^{\ast} A \;\;\;\; \; \forall s \in \R ,$$
hence (since $P$ is normal)
$$s( AP^{\ast}+ PA^{\ast}) + s^{2} AA^{\ast} =  s( P^{\ast}A+ A^{\ast}P) + s^{2} A^{\ast} A \;\;\;\; \; \forall s \in \R ,$$
in particular, if we take $s=1$ and $s=-1$, we get that $A$ is normal. Hence $Z \subset R(n)$. 

So, to prove our statement, it is sufficient to prove that, if $Z$ is a linear subspace in $R(n)$, then $\dim(Z) \leq n$.

Let $A,B \in Z$. Hence $A+z B \in Z \subset R(n)$ for any $z \in \C$, thus 
$$ (A+zB)(A^{\ast} + \overline{z}B^{\ast}) = 
 (A^{\ast} + \overline{z}B^{\ast}) (A+ zB), $$
that is 
$$ A A^{\ast} -A^{\ast} A + z (B A^{\ast}-A^{\ast}B) + \overline{z}(AB^{\ast}-B^{\ast}A) + |z|^2 (BB^{\ast}-B^{\ast}B)=0.  $$
Since $A$ and $B$ are normal, we get that, for any $r, s\in \R$, 
$$  (r+is) (B A^{\ast}-A^{\ast}B) + (r-is)(AB^{\ast}-B^{\ast}A)=0 .$$
If we take first $r=1$ and $s=0$ and then 
$r=0$ and $s=1$, we get respectively
$$   (B A^{\ast}-A^{\ast}B) + (AB^{\ast}-B^{\ast}A )=0$$
and
$$  (B A^{\ast}-A^{\ast}B)  -(AB^{\ast}-B^{\ast}A) =0.$$
Therefore
$$  B A^{\ast}-A^{\ast}B  =AB^{\ast}-B^{\ast}A =0.$$ 
In particular $A$ and $B^{\ast}$ are simultaneously diagonalizable and so, being normal, they are unitarily simultaneously diagonalizable. Since a unitary matrix diagonalizing $B^{\ast}$ diagonalizes also $B$,  
we get that  $A$ and $B$ are simultaneously diagonalizable. 
So we have proved that every couple of matrices in $Z$ is  simultaneously diagonalizable. By Corollary \ref{simdiaginf}
there is a basis ${\cal B}$ of  $\C^n$ 
such that every element of ${\cal B}$ is an  eigenvector of every element of $Z$.
Let $C$ be an $(n \times n)$-matrix whose columns are the elements of ${\cal B}$. 
Hence $C^{-1} ZC$ is a  linear subspace of diagonal matrices. So $$\dim (Z)= \dim (C^{-1} ZC) \leq n.$$
\end{proof}

\begin{proof}[Proof of Theorem \ref{diag}]
The first statement is obvious, in fact a linear subspace contained in $D(n,\R) $ can intersect $A(n, \R)$
 only in $0$, so its dimension must be less than or equal to $\frac{n(n+1)}{2}$ and 
the linear subspace of the symmetric matrices achieves this dimension.

Now let $S$ be an affine not linear subspace contained in $D(n,\R)$;
 let $$S = P+Z,$$ where $Z$ is a linear subspace (and  obviously $ P \not \in Z$).
We want to show that $$\dim(Z) \leq  \frac{n(n+1)}{2} -1.$$ 
Let $W$ be the span of $P$ and $Z$. If the dimension of $Z$ were greater than or equal to $ \frac{n(n+1)}{2} $, then the dimension of $W$ would be greater than or equal to $ \frac{n(n+1)}{2} +1 $, so the intersection of $W$ and $A(n, \R)$  would contain a nonzero matrix $Y$.

Since $A(n,\R) \cap D(n,\R)=\{0\}$ and $W \setminus Z \subset D(n,\R) $, we would have that $Y \in Z$.  Hence $P+tY $ would be in $S$ for every $t \in \R$, therefore it would be  diagonalizable for every $t \in \R$. But $Y$ is nonzero and antisymmetric, hence it has at least a nonzero pure imaginary eigenvalue; so, since the roots of a complex  polynomial vary continously as a function of the coefficients (see \cite{H-M}),
 for a sufficient large $t$ at least one of  the complex eigenvalues of $P+tY $ is not real, hence $P+tY $ cannot be diagonalizable. So we get a contradiction, hence we must have  $\dim (S)= \dim(Z) \leq  \frac{n(n+1)}{2} -1.$
 
 Now let $U$ be the span of the matrices $E_{i,i}$ for $i=2, \ldots, n$ and of $E_{i,j}+ E_{j,i}$ for $i,j \in \{1, \ldots, n\}$ with $i <j$.
 Obviously $E_{1,1}+U$ is an affine not linear subspace of dimension $\frac{n(n+1)}{2} -1$
 whose elements are symmetric and hence diagonalizable. Thus
 we have proved that the 
maximal dimension of an affine not linear subspace in $D(n, \R)$ is  $ \frac{n(n+1)}{2} -1$.
\end{proof}

\begin{rem}
One can wonder if the only linear subspace of maximal dimension in $D(n, \R)$ is the subspace of the symmetric matrices, but the answer is obviously  no: for instance the linear subspace $\left\{ \bp a & b \\ 2b & c\ep| \; a,b,c \in \R \right\}$ 
is contained in $D(2, \R)$ and has dimension $3$.
\end{rem}

{\bf Acknowledgments.}
This work was supported by the National Group for Algebraic and Geometric Structures, and their  Applications (GNSAGA-INdAM).


{\small }

\end{document}